\theoremstyle{plain}
\newtheorem{theorem}{Theorem}
\newtheorem{corollary}[theorem]{Corollary}
\newtheorem{lemma}[theorem]{Lemma}
\newtheorem{proposition}[theorem]{Proposition}
\theoremstyle{definition}
\newtheorem{assumption}{Assumption}
\newtheorem{remark}[theorem]{Remark}
\newtheorem*{remark*}{Remark}
\newcommand{\R}{\mathbb R}
\newcommand{\Z}{\mathbb Z}
\renewcommand{\P}{\mathbf P}
\newcommand{\E}{\mathbf E}
\newcommand\eps\varepsilon
\newcommand{\N}{{\mathbb{N}}}
\def\wE{\widehat{\E}}
\def\wP{\widehat{\P}}
\begin{document}
\title[Co-existence of branching populations in random environment]
{Co-existence of branching populations in random environment} 
\thanks{Funded by the Deutsche Forschungsgemeinschaft (DFG, German Research Foundation) –
Project-ID 317210226 – SFB 1283.}
%\subtitle{(Dedicated)}
\author[Elizarov]{Nikita Elizarov}
\address{Faculty of Mathematics, Bielefeld University, Germany}
\email{nelizarov@math.uni-bielefeld.de}

\author[Wachtel]{Vitali Wachtel}
\address{Faculty of Mathematics, Bielefeld University, Germany}
\email{wachtel@math.uni-bielefeld.de}

\begin{abstract}
        In this paper we consider two branching processes living in a joint random environment. Assuming that both 
        processes are critical we address the following question: What is the probability that both populations 
        survive up to a large time $n$? We show that this probability decays as $n^{-\theta}$ with $\theta>0$ which is 
        determined by the random environment. Furthermore, we prove the corresponding conditional limit theorem. One of the main ingredients in the proof is a qualitative bound for the entropic repulsion for two-dimensional random walks conditioned to stay in the positive quadrant. We believe that this bound is also of independent interest.
\end{abstract}

% \version\vspace{1cm}

\keywords{Branching process, random environment, random walk, Doob's $h$-transform}
\subjclass{Primary 60J80; Secondary 60G40, 60F17}
\maketitle

%%%%%%%%%%%%%%%%%%%%%%%%%%%%%%%%%%%%%%%%%%%%%%%%%%%%%%%%%%%%%%%%%%%%%%%%%%
%%%%%%%%%%%%%%%%%%%%%%%%%%%%%%%%%%%%%%%%%%%%%%%%%%%%%%%%%%%%%%%%%%%%%%%%%%
%%%%%%%%%%%%%%%%%%%%%%%%%%%%%%%%%%%%%%%%%%%%%%%%%%%%%%%%%%%%%%%%%%%%%%%%%%
%%%%%%%%%%%%%%%%%%%%%%%%%%%%%%%%%%%%%%%%%%%%%%%%%%%%%%%%%%%%%%%%%%%%%%%%%%
\section{Introduction, main results and discussion}
In this paper we consider a two-dimensional {\it purely decomposable} branching process in a random environment. This model has been introduced by Vatutin and Liu~\cite{VL13}. Let us give a formal description of that process. Let $\mathcal{M}$ denote the space of probability measures on non-negative integers $\mathbb{N}_0$. Let $\mathbf{Q}=(Q_1,Q_2)$
be a random vector whose coordinates take values in $\mathcal{M}$. An environment is then modeled by a sequence $\mathbf{\Pi}=\{\mathbf{Q}(n)\}_{n\ge1}$ of independent copies of $\mathbf{Q}$. We shall consider a time-homogeneous Markov chain $Z(n)=(Z_1(n),Z_2(n)),\ n\ge0$ on $\mathbb{N}_0\times \mathbb{N}_0$, whose transition rules are defined as follows. Given $\mathbf{Q}(n)=(q_1,q_2)$ and $Z(n)=(z_1,z_2)$, the law of $Z(n+1)$ coincides 
with the distribution of the vector
$$
\left(\sum_{j=1}^{z_1}\xi^{(n)}_1(j),\sum_{j=1}^{z_2}\xi^{(n)}_2(j)\right),
$$
where $\{\xi^{(n)}_i(j)\}$ are i.i.d. random variables with the law $q_i$, $i=1,2$. It is immediate from the definition that, conditionally on $\mathbf{\Pi}$, the processes $Z_1(n)$ and $Z_2(n)$ are independent. Thus, the corresponding populations interact via joint environment only. 

Let $\{X(n)\}$ be a sequence of two-dimensional vectors defined by 
\begin{align}
\label{eq:X-def}
X_i(n)=\log\E[\xi^{(n)}_i(1)|\mathbf{Q}(n)],\quad i=1,2;\ n\ge1.
\end{align}
Clearly, that vectors are independent and identically distributed.
We shall always assume that 
\begin{align}
\label{eq:zeromean}
\E X_1(n)=\E X_2(n)=0
\end{align}
and 
\begin{align}
\label{eq:unitvar}
\E X^2_1(n)=\E X^2_2(n)=1.
\end{align}
Under these moment restrictions, both processes $Z_1(n)$ and $Z_2(n)$ are critical and there exist positive constants $\alpha_i$ such that 
\begin{align}
\label{eq:nonex-1dim}
\P(Z_i(n)>0)\sim \frac{\alpha_i}{\sqrt{n}}\quad\text{as }n\to\infty,
\end{align}
see, for example, Chapter 5 in the book~\cite{GKbook} by Kersting and Vatutin.
This yields the following bounds for the non-extinction probability for 
$Z(n)$:
\begin{align*}
&\liminf_{n\to\infty}\sqrt{n}\P(Z(n)\neq0)\ge\max\{\alpha_1,\alpha_2\},\\
&\limsup_{n\to\infty}\sqrt{n}\P(Z(n)\neq0)\le \alpha_1+\alpha_2.
\end{align*}
In particular, the non-extinction probability is of the same order as for processes with one type of particles. This is a quite standard situation for multi-type branching processes in random environments, see Le Page et al.~\cite{PPP18} and references there. This 'one-dimensionality' of non-extinction probabilities can be underlined by the simple observation
that
$$
\{Z(n)\neq0\}=\{Z_1(n)+Z_2(n)>0\}.
$$
Thus, non-extinction of $Z(n)$ is equivalent to positivity of the one-dimensional sequence $Z_1(n)+Z_2(n)$. 

In the present paper we address a question on the co-existence of both types up to time $n$. More precisely, we want to understand the behavior of the probability $\P(Z_1(n)>0,Z_2(n)>0)$. It is rather clear that the moment assumptions \eqref{eq:zeromean}, \eqref{eq:unitvar} are not sufficient for this problem, since they do not describe interactions between types. It turns out, and this is not surprising, that the interactions of populations are described by the covariance 
\begin{align}
\label{eq:covar}
\varrho:=\E[X_1(n)X_2(n)].
\end{align}
We shall also assume that all possible offspring distributions are geometric ones. More precisely, we assume that every realization $q$
of the vector $Q$ is of the form 
\begin{align}
\label{eq:geom}
q_i(\{j\})=p_i(1-p_i)^{j},\quad j\in\N_0,\ i=1,2. 
\end{align}
The corresponding value of the vector $X$ equals
$\left(\log((1-p_1)/p_1), \log((1-p_2)/p_2)\right)$.
This assumption is not crucial and is made to avoid some additional purely technical considerations and to reduce the volume of the paper. Assumption \eqref{eq:geom} can be removed by the method suggested in Geiger and Kersting~\cite{GK02}.
\begin{theorem}
\label{thm:coexistence}
Assume that \eqref{eq:zeromean}, \eqref{eq:unitvar} and \eqref{eq:geom}
hold and that $|\varrho|<1$. Set 
$$
\theta(\varrho)=\frac{\pi}{2\arccos(-\varrho)}.
$$
Assume also that $\E|X(n)|^{2\theta}$ and $\E|X(n)|^2\log(1+|X(n)|)$ are finite.
Then for every starting point $z$ there exists a positive constant $A=A(z)$ such that 
\begin{align}
\label{eq:coex}
\P_z(Z_1(n)>0,Z_2(n)>0)\sim An^{-\theta(\varrho)}\quad\text{as }n\to\infty. 
\end{align}
\end{theorem}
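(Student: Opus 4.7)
The plan is to combine the explicit quenched survival formula available under~\eqref{eq:geom} with a two-dimensional random-walk-in-a-cone analysis for the log-mean walk $S(k)=(S_1(k),S_2(k))$, $S_i(k):=\sum_{j=1}^kX_i(j)$. Iterating the fractional-linear generating functions at each generation yields, for $Z(0)=(1,1)$, the identity
$$
\P_{(1,1)}\bigl(Z_i(n)>0\mid\mathbf{\Pi}\bigr)=\frac{1}{T_i(n)},\qquad T_i(n):=\sum_{k=0}^{n}e^{-S_i(k)},
$$
and, for a general starting point $z$, $\P_z(Z_i(n)>0\mid\mathbf{\Pi})=1-(1-T_i(n)^{-1})^{z_i}$. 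Conditional independence of $Z_1,Z_2$ given $\mathbf{\Pi}$ therefore reduces the problem to the asymptotic evaluation of
$$
\E\left[\prod_{i=1,2}\bigl(1-(1-T_i(n)^{-1})^{z_i}\bigr)\right].
$$

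Since $T_i(n)\ge e^{-\min_{k\le n}S_i(k)}$, the expectation is concentrated on trajectories on which neither $S_i$ makes a deep negative excursion. I would first localize to the event $\mathcal{C}_n:=\{S(k)\in\R_+^2\ \forall\,k\le n\}$, showing via a layer-cake decomposition in the depth of the excursion below the quadrant that the contribution of $\mathcal{C}_n^c$ is $o(n^{-\theta(\varrho)})$: the exponential penalty paid by $(T_1T_2)^{-1}$ outweighs the polynomial gain from the relaxed boundary constraint. The linear change of variables that uncorrelates $X$ sends $\R_+^2$ to a wedge of opening $\arccos(-\varrho)$, so the Denisov--Wachtel asymptotics for 2D random walks in cones give
$$
\P_z(\mathcal{C}_n)\sim c_0\,V(z)\,n^{-\theta(\varrho)},\qquad \theta(\varrho)=\frac{\pi}{2\arccos(-\varrho)},
$$
with $V$ the positive harmonic function for the walk killed on leaving $\R_+^2$, together with a functional limit theorem asserting that the rescaled walk under the Doob $h$-transform with $h=V$ converges to the Brownian motion in $\R_+^2$ conditioned never to exit. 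The moment hypotheses $\E|X(n)|^{2\theta}<\infty$ and $\E|X(n)|^2\log(1+|X(n)|)<\infty$ are exactly what is needed for this asymptotic and its companion FCLT.

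The main obstacle, and the reason for the entropic-repulsion bound advertised in the abstract, is the tightness of $T_1(n),T_2(n)$ under the conditional measure $\P(\,\cdot\mid\mathcal{C}_n)$, which is what permits interchanging limit and expectation. Under the unconditioned measure both $T_i(\infty)$ are a.s.\ infinite, so the argument genuinely requires a quantitative statement that, conditionally on $\mathcal{C}_n$, each coordinate $S_i(k)$ lies at distance of order $\sqrt{k\wedge(n-k)}$ from its axis on enough indices $k$ that $\sum_ke^{-S_i(k)}=O_{\P}(1)$. The delicate point is that the marginals of the conditioned two-dimensional walk are \emph{not} the one-dimensional conditioned walks: the two axes interact through $\varrho$, so classical one-dimensional entropic repulsion of Iglehart--Tanaka type does not apply directly. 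I would prove the bound by coupling with the $h$-transformed Brownian motion in the wedge and estimating the sojourn time of the conditioned random walk in thin strips along each axis through moment bounds on the associated hitting times.

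With the entropic-repulsion bound in hand, dominated convergence combined with the conditioned FCLT identifies the constant $A(z)$ in~\eqref{eq:coex} as an explicit integral involving $(T_1(\infty)T_2(\infty))^{-1}$ (interpreted appropriately for starting point $z$) against the law of the conditioned Brownian motion in $\R_+^2$, weighted by $V(z)$. All remaining steps are now-standard random-walk-in-cone techniques once the entropic-repulsion bound is secured, which is where the main technical difficulty lies.
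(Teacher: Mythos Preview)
Your overall strategy matches the paper's: reduce to the random-walk functional $\E[Y(n)]$ via the fractional-linear formula, localize to paths whose running minimum stays above some level, and invoke entropic repulsion to show the limiting constant is strictly positive. There are, however, two genuine problems.

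First, your localization claim is incorrect as stated. The contribution of $\mathcal{C}_n^c=\{m(n)<0\}$ is \emph{not} $o(n^{-\theta})$: the layer-cake you describe yields only
\[
\E[Y(n);\,\mathcal{C}_n^c]\le Cn^{-\theta}\sum_{R\ge0}(R+1)^pe^{-R}=O(n^{-\theta}),
\]
the same order as the main term. The paper's remedy is to cut at level $-R_0$ rather than at $0$: the event $\{m(n)>-R_0\}$ equals $\{\tau_{(R_0,R_0)}>n\}$, and a direct convergence lemma (Lemma~\ref{cond_limit}, not the FCLT) gives $\E[Y(n)\mid\tau_{(R_0,R_0)}>n]\to\wE_{(R_0,R_0)}[Y(\infty)]$, while the remainder is $O(R_0^pe^{-R_0}n^{-\theta})$. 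A sandwich argument with $R_0\to\infty$ then identifies the constant. Note also that the associated walk starts at $0$, not at the branching starting point $z$; the dependence on $z$ enters only through the exponents $z_i$ in $Y(n)$, so writing $\P_z(\mathcal{C}_n)\sim c_0V(z)n^{-\theta}$ conflates two different objects.

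Second, your proposed route to entropic repulsion---coupling with the $h$-transformed Brownian motion in the wedge---differs from the paper's and it is not clear it can deliver the required precision: one needs summability of $\wP_x(x_i+S_i(n)\le\log^2n)$, and strong-approximation control near the boundary at that scale is delicate. The paper instead bounds these probabilities directly. It splits according to whether the \emph{other} coordinate $S_{3-i}(n)$ lies below or above $n^{1/2+\delta}$; the first part is handled by a local-limit bound combined with time reversal (Lemma~\ref{lem:S2-small}), and the second by a Fuk--Nagaev inequality together with a Markov decomposition at time $n/2$ (Lemma~\ref{lem:S2-large}) designed so that no moment beyond $\E|X|^{p\vee2}$ is required. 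This direct moment analysis, not a Brownian coupling, is the technical core of the argument.
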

Our assumption $|\varrho|<1$ implies that $\theta(\varrho)\in(1/2,\infty)$. Thus, combining \eqref{eq:coex} with \eqref{eq:nonex-1dim}, we conclude that
$$
\P(Z(n)\neq0)\sim\frac{\alpha_1+\alpha_2}{\sqrt{n}}.
$$

Let us now comment on boundary cases $\varrho=\pm1$.

If $\varrho=1$ then $X_1(n)=X_2(n)$ for all $n$. This means that every environment is either good or bad for both populations. In this case one can easily infer that the co-existence probability is asymptotically equivalent to $Cn^{-1/2}$ with some positive constant $C$. 

If $\varrho=-1$ then $X_1(n)=-X_2(n)$ for all $n$. This situation is opposite to the case $\varrho=1$: if a particular environment is good for the first population then it is bad for the second one, and vice versa. Because of this effect Vatutin and Liu~\cite{VL13} has called this environment {\it asynchronous}. Their paper studies quenched distribution of the process conditioned on co-existence and observe appearance of bottlenecks. For their purposes they do not need to know the co-existence probability. Since $\theta(\varrho)$ is monotone decreasing and 
$\lim_{\varrho\to-1}\theta(\varrho)=\infty$, it is natural to expect that
$\P(Z_1(n)>0,Z_2(n)>0)$ decreases faster than any power. 
More precisely, using large deviation arguments, one can show that 
$$
\log\P(Z_1(n)>0,Z_2(n)>0)\sim -cn^{1/3}
$$
with some positive constant $c>0$. But it is absolutely unclear to us, how to get exact asymptotics for the co-existence probability
in the case $\varrho=-1$.

The proof of Theorem~\ref{thm:coexistence} uses the standard for branching processes in random environments relation between processes and so-called
{\it associated random walks.} We set 
$$
S(n)=X(1)+X(2)+\ldots+X(n),\quad n\ge 1.
$$
This sequence describes the behaviour of conditional expectations of the process $Z(n)$: for every $n\ge1$ one has
$$
\E\left[Z_i(n)|\mathbf{\Pi}\right]=e^{S_i(n)},\quad i=1,2.
$$
Similar to the case of branching processes with one type of particles, it is natural to expect that the co-existence occurs for environments, where
conditional expectations of both coordinates remain separated from zero. This means, roughly speaking, that the walk $S(k)$, $k\ge1$ should stay in the positive quadrant $\R_+^2$ up to time $n$. This problem has been studied by Denisov and Wachtel in \cite{DW15,DW24}. The result of Theorem~\ref{thm:coexistence} can be reformulated then as follows:
$$
\P(Z_1(n)>0,Z_2(n)>0)\sim c\P(S(k)\in\R_+^2\text{ for all }k\le n).
$$
Here we assume for simplicity that $\P(X(n)\in\R_+^2)$ is positive. Otherwise one has to replace $S(k)$ by $(R,R)+S(k)$ with sufficiently large $R$ and with constant $c$ depending on $R$.

The most difficult step in the proof of Theorem~\ref{thm:coexistence} is the Proposition~\ref{prop:repulsion} below, where we give a quantitative bound for the so-called entropic repulsion for the random walk $\{S(n)\}$ conditioned to stay in $\R_+^2$ at all times.
This result allows us to conclude that the constant $A$ in \eqref{eq:coex} is strictly positive.

We next formulate a limit theorem for logarithmically scaled sizes of populations conditioned on co-existence. On the set $E(n):=\{Z_1(n)>0,Z_2(n)>0\}$ we define
$$
z^{(n)}(t)=\left(\frac{\log Z_1(nt)}{\sqrt{n}},\frac{\log Z_1(nt)}{\sqrt{n}}\right),\quad t\in[0,1].
$$
\begin{theorem}
\label{thm:cond.limit}
Suppose all the conditions of Theorem~\ref{thm:coexistence} hold. Then the sequence $z^{(n)}$ conditioned on $E(n)$ converges in distribution on $D[0,1]$ with uniform metric. (The limiting process can be called Brownian meander in $\R_+^2$ and is formally defined in the proof below.)
\end{theorem}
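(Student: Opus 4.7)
The plan is to couple the rescaled log-population process $z^{(n)}$ with the rescaled associated random walk $S^{(n)}(t):=n^{-1/2}S([nt])$, import the functional limit theorem for $S^{(n)}$ under the conditioning $\{S(k)\in\R_+^2,\;k\le n\}$, and show that the discrepancy $z^{(n)}-S^{(n)}$ is uniformly negligible on $E(n)$. I define the \emph{Brownian meander in $\R_+^2$} as the weak limit in $D[0,1]$ with the uniform metric of $S^{(n)}$ under $\P(\,\cdot\mid S(k)\in\R_+^2\text{ for all }k\le n)$; existence of this limit is obtained by the methods of Denisov--Wachtel~\cite{DW15,DW24} and identifies it as a two-dimensional Brownian motion of covariance $\bigl(\begin{smallmatrix}1 & \varrho\\\varrho & 1\end{smallmatrix}\bigr)$ conditioned, in Doob's sense, by the positive harmonic function for planar Brownian motion killed upon exit from $\R_+^2$.

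I decompose $z^{(n)}_i(t)=S^{(n)}_i(t)+n^{-1/2}L^{(n)}_i(t)$ with $L^{(n)}_i(t):=\log W_i([nt])$ and $W_i(k):=Z_i(k)e^{-S_i(k)}$. Because of the geometric offspring assumption~\eqref{eq:geom}, the quenched distributions of $Z_i(k)$ are linear-fractional iterates, and both $\P(Z_i(k)>0\mid\mathbf{\Pi})$ and, given survival, the law of $W_i(k)$, can be expressed explicitly in terms of the sum $\Sigma_i(k):=\sum_{j=0}^{k-1}e^{-S_i(j)}$ with bounded coefficients; in particular $W_i(k)$ is of order one in probability as soon as $\Sigma_i(k)$ is bounded. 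The core estimate to be established is then
\[
\sup_{k\le n}\bigl|\log W_i(k)\bigr|=o(\sqrt{n})\qquad\text{in }\P_z(\,\cdot\mid E(n))\text{-probability},\quad i=1,2.
\]

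This is where Proposition~\ref{prop:repulsion} enters decisively: its quantitative entropic repulsion bound yields $S_i(k)\ge c\,k^{\gamma}$ for some $\gamma>0$ on a set of indices of density tending to one under the walk-conditioned law, which forces $\Sigma_i(n)$ to remain stochastically bounded and the martingale $W_i$ to concentrate. The short initial interval $t\in[0,\eps]$ is handled separately by the crude deterministic bound $\log Z_i([nt])\le \log Z_i(0)+\sum_{j\le n\eps}X_i^+(j)$ plus a tight martingale remainder, together with Doob-type estimates. Combining the two regimes gives $\sup_{t\in[0,1]}|z^{(n)}_i(t)-S^{(n)}_i(t)|\to 0$ in $\P_z(\,\cdot\mid E(n))$-probability.

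Finally, Theorem~\ref{thm:coexistence} and its proof yield $\P_z(E(n))/\P(S(k)\in\R_+^2,\;k\le n)\to c_z>0$ and, more strongly, that for every bounded continuous functional $F$ of the rescaled walk the conditional expectations $\E_z[F(S^{(n)})\mid E(n)]$ and $\E[F(S^{(n)})\mid S(k)\in\R_+^2,\;k\le n]$ have the same limit. The Denisov--Wachtel weak convergence for $S^{(n)}$ therefore transfers to $z^{(n)}$ via the continuous mapping theorem, combined with the uniform smallness of $L^{(n)}/\sqrt{n}$ established above. The principal obstacle is exactly this uniform smallness: it cannot be obtained by soft arguments and is precisely what necessitates the quantitative repulsion of Proposition~\ref{prop:repulsion}; without a power-law separation of $S$ from the boundary, the quenched survival probabilities would not concentrate and the change of measure from $\P_z(\,\cdot\mid E(n))$ to the walk-conditioned law would fail.
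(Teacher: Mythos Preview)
Your overall architecture is correct: reduce to a functional limit theorem for $s^{(n)}(t)=n^{-1/2}S([nt])$ under $\P_z(\,\cdot\mid E(n))$ by showing $\sup_{t\le 1}|z^{(n)}(t)-s^{(n)}(t)|\to 0$ in conditional probability. However, you have misidentified where the difficulty lies and where Proposition~\ref{prop:repulsion} enters.

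The uniform smallness of $\log W_i(k)=\log Z_i(k)-S_i(k)$ does \emph{not} require entropic repulsion. The paper obtains it by a direct, elementary, \emph{quenched} computation (Lemma~\ref{lem:z-s}): the upper deviation of $W_i$ is controlled by Doob's inequality for the martingale $Z_i(k)e^{-S_i(k)}$, and the lower deviation by plugging a suitable argument into the explicit linear-fractional generating function $F_{i,k}$; this yields
\[
\P_z\Bigl(\max_{k\le n}|\log Z_i(k)-S_i(k)|\ge \varepsilon\sqrt{n},\ Z_i(n)>0\ \Big|\ \mathbf{\Pi}\Bigr)\le C z_i\,n\,e^{-\varepsilon\sqrt{n}},
\]
uniformly in the environment. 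After dividing by $\P_z(E(n))\asymp n^{-\theta}$ from Theorem~\ref{thm:coexistence}, this still vanishes. Your route via Proposition~\ref{prop:repulsion} is both unnecessary and incomplete: that proposition is stated under $\widehat{\P}_x$, not under $\P_z(\,\cdot\mid E(n))$, and you give no argument transferring the repulsion between these measures; moreover, boundedness of $\Sigma_i(n)$ under the walk law says nothing about the branching variable $Z_i(k)$ at intermediate times without invoking the linear-fractional structure anyway. In the paper, Proposition~\ref{prop:repulsion} is used only in the proof of Theorem~\ref{thm:coexistence}, to guarantee $\wE_{(R_0,R_0)}[Y(\infty)]>0$ and hence $A>0$; it plays no role in the proof of Theorem~\ref{thm:cond.limit}.

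Conversely, you undersell the second step. The statement ``$\E_z[F(s^{(n)})\mid E(n)]$ and $\E[F(s^{(n)})\mid \tau_x>n]$ have the same limit'' is precisely what has to be proven here; it does not follow from Theorem~\ref{thm:coexistence} as stated. The paper writes $\E_z[f(s^{(n)});E(n)]=\E[f(s^{(n)})Y(n)]$ with $Y(n)$ the quenched co-survival probability, splits on $\{m(n)>-R_0\}$, replaces $Y(n)$ by $Y(k)$ and $s^{(n)}$ by $s^{(n)}_k$ using Lemma~\ref{cond_limit} and a tightness estimate, and then applies the Duraj--Wachtel invariance principle~\cite{DuW20} to the inner expectation via the Markov property at time $k$. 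Finally $k\to\infty$ and $R_0\to\infty$ using \eqref{eq:R-limit} and \eqref{eq:limit}. This is the substantive part of the proof, and your proposal does not supply it.
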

Having asymptotics for the probability of $E(n)$, the proof of this limit theorem is not very difficult and follows the standard path for branching processes in random environments, see Section 5.7 in\cite{GKbook}.

%%%%%%%%%%%%%%%%%%%%%%%%%%%%%%%%%%%%%%%%%%%%%%%%%%%%%%%%%%%%%%%%%%%%%%%%%%
%%%%%%%%%%%%%%%%%%%%%%%%%%%%%%%%%%%%%%%%%%%%%%%%%%%%%%%%%%%%%%%%%%%%%%%%%%
\section{Associated random walk in the positive quadrant.}
In this section we collect needed in the proof of Theorem~\ref{thm:coexistence} results on the associated random walk 
$S(n)$. 

We start by summarizing known in the literature results for random walks confined to cones.
Set
$$
T=
\left(
\begin{array}{cc}
\frac{1}{\sqrt{1-\varrho^2}}, &\frac{-\varrho}{\sqrt{1-\rho^2}}\\
0, &1
\end{array}
\right)
$$
It is easy to see that the vectors $Y(n)=TX(n)$ have then uncorrelated coordinates with unite variances. This linear transformation changes also the cone we are looking at: the image  $T(\R_+^2)$ of the positive quadrant has opening $\arccos(-\rho)\in(0,\pi)$. In particular, the cone $T(\R_+^2)$ is convex.

The linear transformation with matrix $T$ allows us to use all the results from \cite{DW15,DW24} with $p=\frac{\pi}{\arccos(-\varrho)}$ in the cone $T(R_+^2)$. Applying then the inverse transformation, we infer that all the results are valid for the walk $S(n)$ confined to the positive quadrant.  

Set
$$
\tau_x:=\inf\{n\ge1:x+S(n)\notin\R_+^2\},\quad x\in\R_+^2.
$$
Then, according to Theorem 3 in \cite{DW24},
\begin{align}
\label{eq:tau-tail}
\P(\tau_x>n)\sim\varkappa V(x)n^{-p/2}\quad\text{as }n\to\infty
\end{align}
for every $x\in\R_+^2$. The function $V$ is harmonic for the walk $S(n)$ killed at leaving the cone $\R_+^2$, that is,
\begin{align}
\label{eq:V-harm}
V(x)=\E[V(x+S(1));\tau_x>1].
\end{align}
Note also that
$$
\theta=\frac{p}{2}.
$$
In the proofs we shall use $p$ instead of $\theta$, since this notation is standard for random walks in cones.

We next define a process, which will appear as the limit in Theorem~\ref{thm:cond.limit}. 
We are going to use the results from \cite{DuW20}, where random walks with uncorrelated co-ordinates have been considered. We may apply Theorem 2 from that paper to the walk $\{TS(n)\}$ conditioned to stay in the cone $T(\R_+^2)$. This result states that 
$\frac{x+TS(nt)}{\sqrt{n}}$ conditioned to stay in $T(\R_+^2)$ converges weakly on $D[0,1]$ with the uniform metric towards the Brownian meander $M_{T(\R_+^2)}$; this process is constructed in Subsection~2.1 of~\cite{DuW20}. Applying the linear transformation $T^{-1}$, we conclude that the process 
$\frac{x+S(nt)}{\sqrt{n}}$, $t\in[0,1]$ conditioned on $\tau_x>n$ converges weakly on $D[0,1]$ towards the process
$M_\varrho(t)=T^{-1}M_{T(\R_+^2)}$.

The harmonicity of $V$ allows one to perform Doob's $h$-transform and to introduce new measures $\widehat{\P}_x$, $x$ is such that $V(x)>0$, characterized by equalities
\begin{align}
\label{eq:Doob-transform}
\widehat{\P}_x(A)=\frac{1}{V(x)}\E[V(x+S(n)){\rm 1}_A;\tau_x>n]
\end{align}
for every $A\in\sigma(S(1),S(2),\ldots,S(n))$ and every $n\ge1$.

One of the effects of this change of measure is the so-called entropic repulsion: ${\rm dist}(x+S(n),\partial\R_+^2)$ goes to infinity in $\widehat{\P}_x$-probability. Our next result gives a certain quantitative bound for this effect.
\begin{proposition}
\label{prop:repulsion}
Assume that all moment assumptions in Theorem~\ref{thm:coexistence}
hold. Then
$$
\widehat{\P}_x(x_i+S_i(n)\le \log^2 n\text{ i.o.})=0,\quad i=1,2
$$
for every $x\in\R_+^2$ such that $V(x)>0$. 
\end{proposition}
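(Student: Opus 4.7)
The plan is to apply the Borel--Cantelli lemma. By symmetry between the two coordinates it suffices to prove that $\sum_{n\ge 1}\widehat{\P}_x(x_1+S_1(n)\le \log^2 n)<\infty$. Unfolding the Doob transform \eqref{eq:Doob-transform},
\begin{align*}
\widehat{\P}_x\bigl(x_1+S_1(n)\le L\bigr)
=\frac{1}{V(x)}\E\bigl[V(x+S(n))\mathbf{1}\{x_1+S_1(n)\le L\};\tau_x>n\bigr],
\end{align*}
so the task is to estimate the right-hand side for $L=\log^2 n$.

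The first ingredient is a pointwise upper bound
\[
V(y)\le C\,y_1(1+|y|)^{p-1},\qquad y\in\R_+^2,
\]
which follows from $V\le Cu$ together with the observation in Section~2: the distance from $Ty$ to the boundary ray of $T(\R_+^2)$ that is the image of $\{y_1=0\}$ equals $y_1$, so $\sin(p\alpha(Ty))\le Cy_1/|Ty|$ and hence $u(y)=|Ty|^p\sin(p\alpha(Ty))\le Cy_1|Ty|^{p-1}\le Cy_1(1+|y|)^{p-1}$. On the event $\{x_1+S_1(n)\le L\}$ this yields $V(x+S(n))\le CL(1+|x+S(n)|)^{p-1}$. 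The second ingredient is an integrated local-limit bound
\[
\E\bigl[(1+|x+S(n)|)^{p-1}\mathbf{1}\{x_1+S_1(n)\le L\};\tau_x>n\bigr]\le CV(x)L^2 n^{-3/2},
\]
valid for $1\le L\le\sqrt n$ and large $n$. I would deduce this from the density upper bound
\[
\P_x\bigl(x+S(n)\in dy,\tau_x>n\bigr)\le CV(x)u(y)e^{-c|y|^2/n}n^{-p-1}\,dy
\]
coming from the local limit theorem for random walks in cones of~\cite{DW24}; the moment conditions $\E|X|^{2\theta}<\infty$ and $\E|X|^2\log(1+|X|)<\infty$ from Theorem~\ref{thm:coexistence} are precisely what is needed. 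Substituting $u(y)\le Cy_1(1+|y|)^{p-1}$ reduces matters to the elementary integrals $\int_0^L y_1\,dy_1=L^2/2$ and $\int_0^\infty(1+y_2)^{2(p-1)}e^{-cy_2^2/n}dy_2\le Cn^{(2p-1)/2}$, which combine to give $V(x)L^2 n^{-p-1}\cdot n^{p-1/2}=CV(x)L^2 n^{-3/2}$.

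Multiplying the two bounds, $\widehat{\P}_x(x_1+S_1(n)\le L)\le CL^3 n^{-3/2}$. Choosing $L=\log^2 n$ produces $C\log^6 n\,n^{-3/2}$, which is summable, so Borel--Cantelli concludes the proof. The principal obstacle is the density upper bound above: while the global LLT is furnished by \cite{DW24} in the moderate deviation regime $|y|\asymp\sqrt n$, the required uniformity as $y_1\to 0$, where the harmonic function $u$ vanishes linearly in $y_1$, is exactly what extracts the $L^2$ factor that upgrades the naive $L/n^{1/2}$ bound (which would be the output of the CLT for the conditioned walk alone) to the summable $L^3 n^{-3/2}$.
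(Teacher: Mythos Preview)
Your Borel--Cantelli reduction and the bound $V(y)\le Cy_1(1+|y|)^{p-1}$ are both correct and match the paper's \eqref{eq:V-bound}. The genuine gap is the density upper bound
\[
\P_x\bigl(x+S(n)\in dy,\tau_x>n\bigr)\le CV(x)\,u(y)\,e^{-c|y|^2/n}n^{-p-1}\,dy.
\]
This is not furnished by \cite{DW15,DW24} and is in fact false under the moment hypotheses of Theorem~\ref{thm:coexistence}. A global sub-Gaussian tail $e^{-c|y|^2/n}$ for the position of a random walk requires sub-Gaussian increments; with only $\E|X|^{p\vee2}<\infty$ the tail $\P(|S(n)|>t)$ for $t\gg\sqrt n$ is polynomial (one big jump), as the Fuk--Nagaev inequality in Lemma~\ref{tail_sigma_estimation} makes explicit. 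Your Gaussian factor is exactly what would kill the contribution from $\{x_2+S_2(n)\gg\sqrt n\}$, and without it your integral $\int_0^\infty(1+y_2)^{2(p-1)}e^{-cy_2^2/n}dy_2$ diverges.

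You have also misidentified the obstacle. The factor $u(y)\sim y_1$ near the boundary $\{y_1=0\}$ \emph{is} recoverable: the paper obtains it in Lemma~\ref{lem:S2-small} by time reversal, picking up an extra $k_1/\sqrt n$ from a one-dimensional first-passage estimate. This yields precisely your $L^3n^{-3/2}$ bound, but only on the event $\{x_2+S_2(n)\le n^{1/2+\delta}\}$ (Corollary~\ref{cor:small-S2}). The hard part is the complementary event $\{x_2+S_2(n)>n^{1/2+\delta}\}$, for which the paper needs the more delicate Lemma~\ref{lem:S2-large}, combining a Markov split at time $n/2$, another time reversal, and the truncated-moment bound of Lemma~\ref{lem:expect-tail}. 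The Remark following Lemma~\ref{lem:S2-large.simple} makes the point explicitly: even one extra moment $\E|X|^{p\vee2+1}<\infty$ would allow a short argument, but under the minimal assumption the tail contribution is only barely summable and requires the full machinery.
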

The remaining part of the section is devoted to the proof of this statement.
\subsection{Auxiliary estimates for the walk $S(n)$ in the positive quadrant.}
\begin{lemma}\label{small_V}
The exist a compact set $D$ and a constant $\delta>0$
such that
\begin{equation}
\label{eq:V-positiv}
V(z)\ge\delta
\end{equation}
and 
\begin{equation}
\label{eq:V>u}
V(z)\ge\delta u(z)
\end{equation}
for all $z\in\R_+^2\setminus D$.
\end{lemma}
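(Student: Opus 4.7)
The plan is to combine the asymptotic $V(z)\sim u(z)$ established in \cite{DW15,DW24} with the harmonic identity \eqref{eq:V-harm} to deal separately with the ``bulk'' and ``boundary'' parts of $\R_+^2\setminus D$. I would take $D=\{z\in\R_+^2:|z|\le R\}$ and split its complement according to whether $z/|z|$ lies in a fixed compact subset $K$ of the interior of $\R_+^2\cap S^1$, or is angularly close to $\partial\R_+^2$.

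\smallskip
\noindent\textbf{Bulk region.} The results of \cite{DW15,DW24}, applied after the transformation $T$ to the convex wedge $T(\R_+^2)$, yield $V(z)/u(z)\to 1$ as $|z|\to\infty$ uniformly for $z/|z|\in K$. Hence, for $R$ sufficiently large and every $z\in\mathcal R_K:=\{z:|z|\ge R,\,z/|z|\in K\}$, one has $V(z)\ge u(z)/2\ge c_K|z|^p/2\ge c_K R^p/2$, which settles both \eqref{eq:V-positiv} and \eqref{eq:V>u} on $\mathcal R_K$.

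\smallskip
\noindent\textbf{Boundary region.} For $z$ with $|z|>R$ but $z/|z|\notin K$, I apply the martingale identity
$$
V(z)=\E\bigl[V(z+S(m))\,\ind\{\tau_z>m\}\bigr]
$$
and restrict to an event $\mathcal A_z\subset\{\tau_z>m\}$, of probability bounded below by some $\eta>0$ uniformly in $z$, on which $z+S(m)\in\mathcal R_K$ and $u(z+S(m))\ge c\,(u(z)\vee 1)$. The bulk estimate applied at $z+S(m)$ then gives
$$
V(z)\ge \tfrac{1}{2}\,\E\bigl[u(z+S(m))\,\ind_{\mathcal A_z}\bigr]\ge \tfrac{c\eta}{2}\,(u(z)\vee 1),
$$
yielding both \eqref{eq:V-positiv} and \eqref{eq:V>u}.

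\smallskip
\noindent\textbf{Main obstacle.} The delicate step is constructing $\mathcal A_z$ with probability bounded below uniformly in $z$: for $z$ very close to $\partial\R_+^2$ with $|z|$ large, a bounded number of walk steps shifts $z/|z|$ only by $O(m/|z|)$, which is much less than the angular distance from $z/|z|$ to $K$. The remedy I would invoke is a Donsker-type scaling with $m\asymp|z|^2$: the rescaled walk $(z+S(\lfloor|z|^2 t\rfloor))/|z|$, $t\in[0,1]$, converges to a Brownian motion started at $z/|z|$, and a Brownian-meander estimate shows that this BM stays in $\R_+^2$ and enters $K$ by time $1$ with positive probability uniformly in the starting point. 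Transferring this uniform lower bound to the random walk at the cone boundary, with the help of the exit tail \eqref{eq:tau-tail}, is the main technical hurdle.
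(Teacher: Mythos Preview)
There is a genuine gap in the boundary-region step. The uniform lower bound $\P(\mathcal A_z)\ge\eta>0$ that you need cannot hold: if $\mathrm{dist}(z,\partial\R_+^2)$ stays bounded while $|z|\to\infty$, then with $m\asymp|z|^2$ the probability that the walk remains in $\R_+^2$ up to time $m$ is of order $(\mathrm{dist}(z,\partial\R_+^2)+1)/\sqrt{m}\to 0$ (already by the one-dimensional fluctuation estimate for the coordinate nearest to the axis). Your Brownian heuristic confirms rather than cures this: for the limiting BM started at a boundary point of the arc $S^1\cap\R_+^2$, the probability of staying in the quadrant and hitting $K$ by time $1$ is zero, so no uniform $\eta$ exists. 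A meander-type statement would only give positivity of the \emph{conditional} probability of reaching $K$, whereas your inequality $V(z)\ge\tfrac12\,\E[u(z+S(m))\ind_{\mathcal A_z}]$ requires the unconditioned $\P(\mathcal A_z)$.

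The paper avoids this entirely by using a sharper form of the input: $V(x)\sim u(x)$ holds as $\mathrm{dist}(x,\partial\R_+^2)\to\infty$, not merely as $|x|\to\infty$ along rays in a fixed compact $K$. Consequently the ``boundary region'' outside a compact corner $D$ is just two strips of \emph{fixed} width $R$ along the axes. To push $z$ from such a strip into $\{\mathrm{dist}(\cdot,\partial\R_+^2)\ge R\}$ one only has to move a bounded distance orthogonally to the nearby axis, so a \emph{fixed} number $n_0=\lceil R/a\rceil$ of steps suffices: choosing $a,b>0$ with $\P(X_1>a,\,X_2>-b)>0$ (and symmetrically), on the event that all $n_0$ increments lie in this half-space the walk from any $z$ in the strip stays in $\R_+^2$ and lands in the bulk, with probability at least $q^{n_0}$ uniformly in $z$. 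Harmonicity then gives $V(z)\ge q^{n_0}\cdot\tfrac12\,u(z+\text{shift})\ge c\,u(z)$ and $V(z)\ge q^{n_0}\cdot\tfrac12\,u(Rx_0)>0$. No scaling and no invariance principle are needed. Your route could in principle be salvaged by allowing $\eta=\eta(z)$ to degenerate like $u(z)/|z|^p$ and compensating via $u(z+S(m))\asymp|z|^p$ on $\mathcal A_z$, but making that uniform near the boundary is considerably more work than the elementary fixed-step argument.
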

\begin{proof}
        According to Theorem 2 in ~\cite{DW24}, 
        $$
        V(x) \sim u(x)\quad\text{as }dist(x,\partial\R_+^2)\to\infty.
        $$ 
        Thus, there exists $R > 0$ such that $\frac{1}{2}u(x)\leqslant V(x)$ for all $x\in\R_+^2$ such that
        $dist(x, \partial\R_+^2) \geqslant R.$  

        Next, let us notice that the assumption $\E X = 0$ implies that there exist $a, b > 0$ such that
        \begin{align*}
                \P\bigl(X_1 > a, X_2 > -b\bigr) =: q_1 > 0,
                \\
                \P\bigl(X_1 > -b, X_2 > a\bigr) =: q_2 > 0.
        \end{align*}
        We set $q := \min\{q_1, q_2\}$ and $n_0 := \frac{R}{a}.$

        Consider following sets:
        \begin{align*}
                B_1 := \{x\in\R_+^2 :\:\: x_2 > n_0 b + R,\:\: 0 < x_1 < R\},
                \\
                B_2 := \{x\in\R_+^2:\:\: x_1 > n_0 b + R,\:\: 0 < x_2 < R\}. 
        \end{align*}
        Set $x_0:=(1,1)$.
        For $x\in B_1,$ one has
        \begin{align*}
                \P\bigl(x + S(n_0)\in R x_0 + \R_+^2,\tau_x > n_0\bigr)&\geqslant\prod\limits_{k = 1}^{n_0}
                \P\bigl(X_1(k) > a, X_2(k) > -b\bigr)\\
                &\geqslant q_1^{n_0}\geqslant q^{n_0}.
        \end{align*}
        Similarly,
        \begin{align*}
                \P\bigl(x + S(n_0)\in R x_0 + \R_+^2,\tau_x > n_0\bigr)\geqslant p^{n_0}.
        \end{align*}
        The harmonicity of $V(x)$ stated in \eqref{eq:V-harm} implies that
        \begin{align*}
                V(x) = \E\bigl[V\bigl(x + S(n)\bigr),\:\:\tau_x > n\bigr]
        \end{align*}
        for all $n\ge1$ and all $x\in\R_+^2$.
        Thus, for all $x\in B_1\cup B_2$ we have
        \begin{align*}
                V(x)
                &\geqslant \P\bigl(x + S(n_0)\in Rx_0 + \R_+^2,\tau_x > n_0\bigr)V(Rx_0)\\ 
                &\geqslant q^{n_0}\frac{1}{2}u(Rx_0) > 0.
        \end{align*}
        For all $x\in Rx_0 + \R_+^2$ one also has $V(x) \geqslant V(Rx_0) \geqslant \frac{1}{2}u(Rx_0)$. Therefore, \eqref{eq:V-positiv} holds with $\delta=q^{n_0}\frac{1}{2}u(Rx_0)$ and 
        $D= \{x:\:\:0\leqslant x_1, x_2\leqslant R\}$.

        To prove the second claim we consider the sets
        \begin{align*}
                B'_1 := \{x\in\R_+^2 :\:\: x_2 > 2n_0 b + R,\:\: 0 < x_1 < R\},
                \\
                B'_2 := \{x\in\R_+^2 :\:\: x_1 > 2n_0 b + R,\:\: 0 < x_2 < R\}.
        \end{align*}
        Similarly to the proof of \eqref{eq:V-positiv}, we have
        \begin{align*}
            \P\bigl(x + S(n_0) \in x + (n_0 a, -n_0 b) + \R_+^2\bigr) \geqslant q_1^{n_0},\quad x\in B_1'
        \end{align*}
        and
        \begin{align*}
            \P\bigl(x + S(n_0) \in x + (-n_0 b, n_0 a) + \R_+^2\bigr) \geqslant q_2^{n_0},\quad x\in B_2'.
        \end{align*}
        Thus, for all $x\in B_1'$ we have
        \begin{align*}
                V(x)&\geqslant \P\bigl(x + S(n_0)\in x + (n_0 a, -n_0 b) + \R_+^2,\tau_x > n_0\bigr)\cdot 
                V\bigl(x + (n_0 a, -n_0 b)\bigr)
                \\
                &\geqslant q^{n_0}V\bigl(x + (n_0 a, -n_0 b)\bigr) . 
        \end{align*}
        As $x + (n_0 a, -n_0 b) + \R_+^2\subset x_0 R + \R_+^2$ one has
        \begin{align*}
                V(x)\geqslant q^{n_0}\cdot V\bigl(x + (n_0 a, -n_0 b)\bigr)\geqslant \frac{q^{n_0}}{2}
                u\bigl(x_1 + n_0 a, x_2 - n_0 b\bigr)
        \end{align*}
        As $x_2 > 2n_0 b + R$ we have $x_2 - n_0 b \geqslant \frac{1}{2}x_2,$ as well as $x_1 + n_0 a\geqslant 
        \frac{1}{2}x_1.$ Thus,
        \begin{align*}
                V(x)\geqslant \frac{q^{n_0}}{2}u\left(\frac{x}{2}\right)\geqslant Cu(x),\quad x\in B_1'
        \end{align*}
        for some $C > 0.$ The same arguments show that this bound remains valid for all $x \in B_2'$.
        This completes the proof of the lemma.
\end{proof}
\begin{corollary}\label{tau_V_estimate}
        There exist a positive constant $C$ and a compact set $D$ such that, uniformly in 
        $x\in\R_+^2\setminus D$ and in $l\ge 1$,
        \begin{align*}
                \P\bigl(\tau_x > l\bigr) \leqslant \frac{C V(x)}{l^{\frac{p}{2}}}.
        \end{align*}
\end{corollary}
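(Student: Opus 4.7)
The plan is to take $D$ to be the compact set from Lemma~\ref{small_V} (so that $V(x) \geq \delta > 0$ and $V(x) \geq \delta u(x)$ for every $x \in \R_+^2 \setminus D$) and to split the statement according to the size of $l$, with a threshold $l_0$ to be determined.

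For $l \leq l_0$ the bound is immediate: combining $\P(\tau_x > l) \leq 1$ with the lower bound from Lemma~\ref{small_V} yields
$$
\P(\tau_x > l) \leq 1 \leq \frac{V(x)}{\delta} \leq \frac{l_0^{p/2}}{\delta}\cdot\frac{V(x)}{l^{p/2}}.
$$

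For $l > l_0$ I would use the pointwise asymptotic \eqref{eq:tau-tail} together with a Markov decomposition at time $k := \lfloor l/2 \rfloor$:
$$
\P(\tau_x > l) = \E\bigl[\P_{x+S(k)}(\tau > l-k);\ \tau_x > k\bigr].
$$
Let $K$ be a large compact subset of $\R_+^2$ (to be fixed). On the event $\{x+S(k) \in K\}$, the goal is to apply $\P_y(\tau > l-k) \leq 2\varkappa V(y)(l-k)^{-p/2}$ uniformly in $y \in K$; this uniform version of \eqref{eq:tau-tail} over compact sets should follow from continuity of $V$ together with the proof of Theorem~3 in \cite{DW24}. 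Combined with the harmonicity identity \eqref{eq:V-harm}, this contribution to $\P(\tau_x > l)$ is at most
$$
\frac{2\varkappa}{(l-k)^{p/2}} \E[V(x+S(k));\ \tau_x > k] = \frac{2\varkappa V(x)}{(l-k)^{p/2}} \leq \frac{C_1 V(x)}{l^{p/2}}.
$$
The residual contribution from $\{x+S(k) \notin K\}$ is to be handled by iterating the Markov step once more and using that, by harmonicity and the lower bound $V \geq \delta$ off $D$, the probability of staying in the quadrant while being outside $K$ at time $k$ is summable once $K$ is chosen large, invoking the moment assumption $\E|X|^{2\theta} < \infty$ to dominate the tail contributions.

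The main obstacle I foresee is the uniformity in $y$ of the tail asymptotic over the compact set $K$: the pointwise version is exactly \eqref{eq:tau-tail}, but a truly uniform statement is not recorded in the excerpt. Once this uniformity is secured (either by revisiting the proof in \cite{DW24} or by a dominated-convergence argument using continuity of $V$), the combination of Markov decomposition, harmonicity of $V$, and Lemma~\ref{small_V} closes the proof.
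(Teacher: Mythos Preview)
The paper's proof is far more direct than yours. It invokes a \emph{uniform} (not merely asymptotic) tail bound from Theorem~3 of \cite{DW24},
\[
\P(\tau_x > l)\ \le\ C_1\,\frac{u(x+Rx_0)}{l^{p/2}}\qquad\text{for all }x\in\R_+^2,\ l\ge 1,
\]
and then simply compares $u(x+Rx_0)$ with $V(x)$: for $x\in Rx_0+\R_+^2$ one has $u(x+Rx_0)\le C_2\,u(x)$ by homogeneity of $u$, and Lemma~\ref{small_V} gives $u(x)\le V(x)/\delta$ off a compact. No Markov decomposition, no compact/non-compact split, no threshold $l_0$.

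Your scheme, by contrast, has a genuine gap in the ``residual'' contribution from $\{x+S(k)\notin K\}$. With $k=\lfloor l/2\rfloor$, the law of $x+S(k)$ conditioned on $\{\tau_x>k\}$ lives at spatial scale $\sqrt{k}\asymp\sqrt{l}$; hence for any \emph{fixed} compact $K$ one has $\P(x+S(k)\notin K\mid\tau_x>k)\to 1$ as $l\to\infty$, and on that event $\P_y(\tau>l-k)$ is typically of order~$1$, not $o(l^{-p/2})$. Neither ``iterating the Markov step'' nor the moment assumption $\E|X|^{2\theta}<\infty$ repairs this: what you actually need on the off-$K$ part is a bound of the form $\P_y(\tau>m)\le C\,g(y)\,m^{-p/2}$ valid for \emph{all} $y\in\R_+^2$, and once such a bound is available (it is precisely the displayed inequality from \cite{DW24}) the entire decomposition becomes superfluous. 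The obstacle you flag --- uniformity of \eqref{eq:tau-tail} over the compact $K$ --- is real but secondary; it too dissolves once the global bound above is in hand.
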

\begin{proof}
        According to Theorem 3 in ~\cite{DW24} there exist positive constants $C_1$ and $R$ such that
        \begin{align}
        \label{eq:tau-u-bound}
                \P\bigl(\tau_x > l\bigr) \leqslant C_1\frac{u(x + Rx_0)}{l^\frac{p}{2}},\quad x\in\R_+^2
        \end{align}
        where again $x_0=(1,1)$.
        According to Lemma \ref{small_V},
        \begin{align*}
                \delta u(x)\leqslant V(x),\quad x\in R_+^2\setminus D.
        \end{align*}
        Moreover, for all $x \in Rx_0 + \R_+^2$ there exists $C_2=C_2(R)$ such that
        \begin{align*}
                u(x + Rx_0)\leqslant C_2 u(x).
        \end{align*}
        Combining that estimates we get the desired inequality.
\end{proof}
\begin{lemma}\label{PinDelta}
        Set $\Delta = [0, 1]^2$. For every $x\in\R_+^2$ there exists
        a constant $C(x)$ such that 
        \begin{align*}
                &\P\bigl(x + S(n)\in z + \Delta, \tau_x > n\bigr) \leqslant \frac{C(x)}
                {n^{\frac{p}{2} + 1}},\quad  z\in\R_+^2.
        \end{align*}
        Let $D$ be a compact set, for which Corollary~\ref{tau_V_estimate} holds. Then there exists a constant $C$ such that  
        \begin{align*}
                &\P\bigl(x + S(n)\in z + \Delta, \tau_x > n\bigr) \leqslant C\frac{V(x)}
                {n^{\frac{p}{2} + 1}},\quad x\in R_+^2\setminus D,\ z\in\R_+^2.
        \end{align*}
\end{lemma}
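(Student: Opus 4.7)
The plan is to split the time interval at $m = \lfloor n/2 \rfloor$ and use the Markov property, combining a bound on $\P(\tau_x > m)$ (already available in the excerpt) with a two-dimensional concentration inequality for the remaining $n - m$ steps.

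First I would drop half of the confinement by monotonicity:
\begin{align*}
        \P\bigl(x + S(n)\in z + \Delta, \tau_x > n\bigr) \le \P\bigl(x + S(n)\in z + \Delta, \tau_x > m\bigr).
\end{align*}
Conditioning on $\mathcal{F}_m$ and using that $S(n) - S(m)$ is independent of $\mathcal{F}_m$, the right-hand side equals
\begin{align*}
        \E\bigl[f_n(x+S(m));\, \tau_x > m\bigr],\qquad f_n(y) := \P\bigl(S(n) - S(m) \in z - y + \Delta\bigr).
\end{align*}

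The second step is a uniform bound $\sup_y f_n(y) \le C/n$. Since $S(n)-S(m)$ is a sum of $n-m \ge n/2$ i.i.d. vectors with zero mean, unit variances, and covariance $\varrho$ with $|\varrho|<1$, the covariance matrix is non-degenerate. A classical two-dimensional concentration function estimate (see e.g.\ Esseen's inequality for the concentration function of sums in $\R^d$) then gives, uniformly in $w\in\R^2$,
\begin{align*}
        \P\bigl(S(n)-S(m)\in w + \Delta\bigr) \le \frac{C}{n},
\end{align*}
which is the desired bound on $f_n$. I would only need finiteness of the second moments and non-degeneracy of the covariance, both of which are in force.

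Putting the two steps together yields
\begin{align*}
        \P\bigl(x + S(n)\in z + \Delta,\tau_x > n\bigr) \le \frac{C}{n}\,\P(\tau_x > m).
\end{align*}
For the first, $x$-dependent claim I would plug in the estimate \eqref{eq:tau-u-bound}: $\P(\tau_x > m) \le C_1 u(x+Rx_0) m^{-p/2}$, which produces the bound $C(x)/n^{p/2+1}$ with $C(x)$ depending on $u(x+Rx_0)$. For the second, uniform claim (valid for $x\in\R_+^2\setminus D$ with $D$ as in Corollary~\ref{tau_V_estimate}), I would instead use $\P(\tau_x > m) \le C V(x) m^{-p/2}$ from that corollary, obtaining the bound $CV(x)/n^{p/2+1}$.

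The only non-routine point is the two-dimensional concentration inequality $\sup_w \P(S(n)-S(m)\in w+\Delta) \le C/n$, which however is a standard consequence of Esseen-type estimates once the covariance is non-degenerate; everything else is a direct Markov-at-$n/2$ reduction to the already established tail bounds for $\tau_x$.
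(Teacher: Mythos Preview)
Your proof is correct and follows essentially the same route as the paper: split at time $\lfloor n/2\rfloor$, bound the second half by Esseen's two-dimensional concentration inequality to extract the factor $C/n$, and then plug in the appropriate tail bound for $\P(\tau_x>n/2)$ (Corollary~\ref{tau_V_estimate} for the uniform statement, the pointwise bound for the $x$-dependent one). The paper cites Esseen's Theorem~6.2 explicitly for the concentration step, which is exactly the ingredient you identify as the one non-routine point.
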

\begin{proof}
        A lattice version of the first statement has been proven in \cite{DW15}. The proof of the general case goes along the same line. 
       
        We first notice that
        \begin{align*}
                &\P\bigl(x + S(n)\in z + \Delta, \tau_x > n\bigr)\\
                &= \int\limits_{z\in\R_+^2}\P\bigl(x + S(n/2)\in d y,
                \tau_x > n/2\bigr)
                \P\bigl(y + S(n/2)\in z + \Delta, \tau_y > n/2\bigr)                
                \\
                &\leqslant\int\limits_{z\in\R_+^2}\P\bigl(x + S(n/2)\in d z, \tau_x > n/2\bigr)
                \P\bigl(z + S(n/2)\in y + \Delta\bigr)
                \\
                &\leqslant\int\limits_{z\in\R_+^2}\P\bigl(x + S(n/2)\in d z, \tau_x > n/2\bigr)
                \P\left(\left|S(n/2) - (y - z + \frac{\sqrt{2}}{2})\right|\leqslant 1\right).
        \end{align*}
        According to Theorem 6.2 of ~\cite{Esseen68},
        \begin{align*}
                \sup\limits_{x\in\R^2}\P\bigl(|S(n) - x|\leqslant 1\bigr)\leqslant \frac{C}{n}.
        \end{align*}
        Thus
        \begin{align*}
                \P\bigl(x + S(n)\in z + \Delta, \tau_x > n\bigr) 
                &\leqslant \frac{C}{n}\int\limits_{z\in\R_+^2}\P\bigl(x + S(n/2)\in d z,\tau_x > n/2\bigr)
                \\
                &\leqslant \frac{C}{n}\P\bigl(\tau_x > n/2\bigr).
        \end{align*}
        Combining this bound with Corollary~\ref{tau_V_estimate}, we get the second claim. To obtain the first inequality, it suffices to notice that \eqref{eq:tau-tail} implies that 
        $\P\bigl(\tau_x > n/2\bigr)\le C(x)n^{-p/2}$.
\end{proof}
\begin{corollary}
\label{cor:bounded_sets}
        Let $K$ be an arbitrary bounded set in $\R_+^2$ 
        For every $x\in\R_+^2$ one has 
        \begin{align*}
                \P\bigl(x + S(n)\in K,\tau_x > n\bigr)\leqslant 
                C(x)\frac{({\rm diam}(K)+1)^2 }{n^{\frac{p}{2} + 1}}.
        \end{align*}
        Furthermore, for all $x\in\R_+^2\setminus D$ one has
        \begin{align*}
                \P\bigl(x + S(n)\in K,\tau_x > n\bigr)\leqslant C\frac{({\rm diam}(K)+1)^2 V(x)}{n^{\frac{p}{2} + 1}}.
        \end{align*}
\end{corollary}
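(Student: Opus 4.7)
The plan is to deduce Corollary~\ref{cor:bounded_sets} directly from Lemma~\ref{PinDelta} by a straightforward covering argument, so there is no real obstacle here; the entire content is the bookkeeping for the number of unit squares needed to cover $K$.

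First I would fix a bounded set $K\subset\R_+^2$ with diameter $d=\mathrm{diam}(K)$. Since $K$ is contained in some axis-aligned square of side $d$ intersected with $\R_+^2$, I can choose a finite collection of points $\{z_j\}_{j=1}^{N}\subset\R_+^2$ such that
\[
K\subset\bigcup_{j=1}^{N}(z_j+\Delta),\qquad \Delta=[0,1]^2,
\]
with $N\le C_0(d+1)^2$ for a universal constant $C_0$ (e.g.\ take $z_j$ on the integer lattice inside a square of side $d+1$ containing $K$, and keep only those whose translated cells meet $K$; those cells are contained in $\R_+^2$ up to shifting by at most one unit, which costs only a factor in $C_0$).

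Next I would use subadditivity to write
\[
\P\bigl(x+S(n)\in K,\ \tau_x>n\bigr)\le \sum_{j=1}^{N}\P\bigl(x+S(n)\in z_j+\Delta,\ \tau_x>n\bigr),
\]
and then apply Lemma~\ref{PinDelta} to each summand. For general $x\in\R_+^2$ this gives each term bounded by $C(x)n^{-p/2-1}$, so the full sum is at most $N\cdot C(x)n^{-p/2-1}\le C(x)C_0(d+1)^2 n^{-p/2-1}$, proving the first inequality. For $x\in\R_+^2\setminus D$, the second part of Lemma~\ref{PinDelta} bounds each term by $CV(x)n^{-p/2-1}$, and summing gives the claimed bound $C(d+1)^2V(x)n^{-p/2-1}$.

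The only mildly delicate point is handling cells $z_j+\Delta$ near the boundary $\partial\R_+^2$: one must make sure the cells in the cover remain in $\R_+^2$ (or at least that the endpoint $x+S(n)$ lies in such a cell), but since Lemma~\ref{PinDelta} is stated for arbitrary $z\in\R_+^2$ and allows $z+\Delta$ to hang over the boundary, this is not a real issue — it only affects the constant $C_0$. Once this covering is set up, the result is immediate.
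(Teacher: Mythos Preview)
Your argument is correct and essentially identical to the paper's: cover $K$ by at most $C(\mathrm{diam}(K)+1)^2$ translates of $\Delta$, apply Lemma~\ref{PinDelta} to each, and sum. The paper gives even less detail than you do, so there is nothing to add.
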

\begin{proof}
        As $K$ is a bounded set there exist $z_1, \dots, z_k$ such that $K\subset\bigcup\limits_{i = 1}^k 
        z_i + \Delta.$ Thus,
        \begin{align*}
                \P\bigl(x + S(n)\in K, \tau_x > n\bigr)\leqslant\sum\limits_{i = 1}^k
                \P\bigl(x + S(n)\in z_i + \Delta,\tau_x > n \bigr).
        \end{align*}
        Applying Lemma~\ref{PinDelta} and noting that the number of summands is bounded above by $C({\rm diam}(K)+1)^2$, we obtain the desired estimates.
\end{proof}

\subsection{Inequalities for one-dimensional walks.}
In this subsection we consider a one-dimensional random walk 
$T(n)=Y(1)+Y(2)+\ldots+Y(n)$, $n\ge1$ with i.i.d. increments 
$\{Y(n)\}$.
We shall always assume that $\E Y(1)=0$ and $\E Y^2(1) < \infty.$
\begin{lemma}\label{tail_sigma_estimation}
        For any stopping time $\sigma$ and for every $r>1$ we have
        \begin{align*}
                \P\bigl(T(n) \geqslant t, \sigma > n\bigr) \leqslant  \P\bigl(Y(1) \geqslant t/r\bigr)
                \sum\limits_{i = 1}^n \P\bigl(\sigma > i - 1\bigr) + 
                e^r\left(\frac{n r\E Y^2(1)}{t^2}\right)^r.
        \end{align*}
\end{lemma}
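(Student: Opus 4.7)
The plan is to use the classical large-jump/small-jump decomposition. Fixing the threshold $t/r$ and writing $M_n:=\max_{i\leq n}Y(i)$, I would start by noting the inclusion
$$
\{T(n)\geq t,\,\sigma>n\}\subseteq\bigl\{\sigma>n,\,M_n\geq t/r\bigr\}\cup\bigl\{T(n)\geq t,\,\sigma>n,\,M_n<t/r\bigr\}.
$$
The first event will be handled by a union bound and will yield the first summand of the claim; the second event will be controlled by an exponential Chebyshev bound applied to truncated increments and will yield the second summand.

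For the large-jump part, I would use the inclusion $\{\sigma>n\}\subseteq\{\sigma>i-1\}$ for each $i\leq n$ together with a union bound to obtain
$$
\P(\sigma>n,\,M_n\geq t/r)\leq\sum_{i=1}^n\P(Y(i)\geq t/r,\,\sigma>i-1).
$$
Since $\sigma$ is a stopping time with respect to the natural filtration of $(Y(k))$, the event $\{\sigma>i-1\}$ lies in $\sigma(Y(1),\ldots,Y(i-1))$ and is therefore independent of $Y(i)$, so the right-hand side factors as $\P(Y(1)\geq t/r)\sum_{i=1}^n\P(\sigma>i-1)$, which is precisely the first term in the lemma.

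For the small-jump part, I would introduce the i.i.d.\ truncated variables $\widehat Y(i):=\min(Y(i),t/r)$. Since on $\{M_n<t/r\}$ they coincide with the original $Y(i)$, the exponential Chebyshev inequality gives, for every $\lambda>0$,
$$
\P(T(n)\geq t,\,\sigma>n,\,M_n<t/r)\leq\P\Bigl(\sum_{i=1}^n\widehat Y(i)\geq t\Bigr)\leq e^{-\lambda t}\bigl(\E e^{\lambda\widehat Y(1)}\bigr)^n.
$$
To estimate $\E e^{\lambda\widehat Y(1)}$ I would invoke the Bennett-type inequality $e^{\lambda y}\leq 1+\lambda y+y^2(e^{\lambda t/r}-1-\lambda t/r)/(t/r)^2$ valid for $y\leq t/r$; combined with $\E\widehat Y(1)\leq\E Y(1)=0$ and $\E\widehat Y(1)^2\leq\E Y^2(1)$ this yields
$$
\E e^{\lambda\widehat Y(1)}\leq\exp\!\left(\E Y^2(1)\cdot\frac{e^{\lambda t/r}-1-\lambda t/r}{(t/r)^2}\right).
$$
Setting $\mu:=\lambda t/r$ and $\alpha:=nr^2\E Y^2(1)/t^2$ and using the crude bound $e^\mu-1-\mu\leq e^\mu$, the exponent in the resulting tail bound becomes $-r\mu+\alpha e^\mu$; minimising in $\mu$ by choosing $\mu=\log(r/\alpha)$ delivers exactly $(e\alpha/r)^r=e^r(nr\E Y^2(1)/t^2)^r$, the second summand.

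The only genuinely technical ingredient is the Bennett-type bound on $\E e^{\lambda\widehat Y(1)}$ together with the calibration of $\lambda$ needed to recover the stated constants; everything else, including the use of the stopping-time property in the large-jump piece, is routine bookkeeping.
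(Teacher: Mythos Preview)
Your proof is correct and follows the same large-jump/small-jump decomposition as the paper; the treatment of the large-jump piece via the stopping-time independence is essentially identical. The one difference is that for the small-jump piece the paper simply invokes Theorem~4 of Fuk and Nagaev as a black box, whereas you re-derive that bound from scratch via the Bennett-type inequality and the optimisation in $\mu$ (which is in fact how the Fuk--Nagaev theorem is proved); your argument is thus self-contained, at the cost of a few extra lines, and recovers exactly the same constant $e^r(nr\E Y^2(1)/t^2)^r$. One small point you glossed over: the choice $\mu=\log(r/\alpha)$ requires $\alpha<r$, but when $\alpha\ge r$ the right-hand side of the lemma exceeds $1$ and the bound is trivial.
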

\begin{proof}
Set $\overline{Y}(n):=\max_{k\le n}Y(k)$.
Then

\begin{align*}
        \P\bigl(T(n) \geqslant t, \sigma > n\bigr)\leqslant\P\bigl(\overline{Y}(n)\ge t/r, \sigma > n\bigr)  
        + \P\bigl(T(n) \geqslant t, \overline{Y}(n)< t/r\bigr).
\end{align*}

According to Theorem 4 in Fuk and Nagaev~\cite{FukNagaev},
\begin{align*}
        \P\bigl(T(n) \geqslant t, \overline{Y}(n)< t/r\bigr)
        \leqslant 
        e^r\left(\frac{n r\E Y^2(1)}{t^2}\right)^r.
\end{align*}

Consider the  stopping time $\eta := \inf\{j\: : Y(j) \ge  t/r\}$. Then
\begin{align*}
\P\bigl(\overline{Y}(n)\ge t/r, \sigma > n\bigr) &= \P\bigl(\eta \leqslant n, 
\sigma > n\bigr)\leqslant \P\bigl(\eta\leqslant n, \eta < \sigma\bigr)\\
&=\sum\limits_{i = 1}^n \P\bigl(\eta = i, \sigma > i\bigr)\leqslant 
\sum\limits_{i = 1}^n \P\bigl(Y(i) > t, \sigma > i - 1\bigr). 
\end{align*}

Since $\sigma$ is a stopping time, the events $\{\sigma > i - 1\}$ and $\{Y(i>t)\}$ are independent. This completes the proof of the lemma.
\end{proof}

\begin{lemma}\label{lem:expect-tail}
        Fix some $p\ge1$ and assume that $\E\big|Y(1)\big|^q < \infty$ for some $q\geqslant p\vee 2$. Then, for any stopping time 
        $\sigma$ and for every $r>\frac{p-1}{2}\vee 1$ we have
\begin{align*}
\E\Bigl[(T(n))^{p - 1}; T(n) > t, \sigma > n\Bigr]
        &\leqslant r^q\frac{q}{q+1-p}\E|Y(1)|^q t^{p - 1 - q}\sum\limits_{i = 1}^n \P\bigl(\sigma > i - 1\bigr)
        \\
        &\hspace{1cm}+ (er\E Y^2(1))^r n^r\frac{2r}{2r+1-p}
          t^{p-1-2r}.
\end{align*}
\end{lemma}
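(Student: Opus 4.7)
The plan is to reduce the statement to an integrated version of Lemma~\ref{tail_sigma_estimation} via a layer-cake representation. For any nonnegative random variable $U$ and $p\ge 1$ one has
\begin{align*}
U^{p-1}\mathbf{1}\{U>t\}=t^{p-1}\mathbf{1}\{U>t\}+(p-1)\int_t^\infty s^{p-2}\mathbf{1}\{U>s\}\,ds.
\end{align*}
Applying this with $U=T(n)\vee 0$ and intersecting with $\{\sigma>n\}$, then taking expectations and using Fubini, I get
\begin{align*}
\E\bigl[(T(n))^{p-1};T(n)>t,\sigma>n\bigr]
=t^{p-1}F(t)+(p-1)\int_t^\infty s^{p-2}F(s)\,ds,
\end{align*}
where $F(s):=\P(T(n)>s,\sigma>n)$.

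Next I would insert the bound from Lemma~\ref{tail_sigma_estimation} into $F(s)$ for every $s\ge t$, controlling the ``big jump'' term by Markov's inequality: $\P(Y(1)\ge s/r)\le r^q s^{-q}\E|Y(1)|^q$, which uses precisely the assumption $\E|Y(1)|^q<\infty$. This splits the estimate into two parts. The integrals
\begin{align*}
\int_t^\infty s^{p-2-q}\,ds=\frac{t^{p-1-q}}{q+1-p},\qquad
\int_t^\infty s^{p-2-2r}\,ds=\frac{t^{p-1-2r}}{2r+1-p}
\end{align*}
are finite exactly because $q\ge p$ and $r>(p-1)/2$, which are the hypotheses of the lemma.

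Combining the boundary term $t^{p-1}F(t)$ with the integral and simplifying the arithmetic coefficients via
\begin{align*}
1+\frac{p-1}{q+1-p}=\frac{q}{q+1-p},\qquad 1+\frac{p-1}{2r+1-p}=\frac{2r}{2r+1-p},
\end{align*}
I obtain exactly the two terms in the stated inequality, with $N=\sum_{i=1}^n\P(\sigma>i-1)$ inherited from Lemma~\ref{tail_sigma_estimation}. I do not foresee any real obstacle here beyond bookkeeping: the only points requiring attention are (i) verifying that both integrals converge at infinity, which motivates the precise lower bounds on $q$ and $r$, and (ii) ensuring the layer-cake identity is applied only on the event $\{T(n)>0\}$ so that the representation is lossless (trivially, the contribution from $\{T(n)\le 0\}$ is excluded by the indicator $\mathbf{1}\{T(n)>t\}$ since $t>0$ in the intended applications, and for $t\le 0$ the bound is vacuous or easily handled separately).
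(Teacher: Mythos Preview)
Your proposal is correct and follows essentially the same route as the paper: the paper also writes the layer-cake identity (called ``integrating by parts'' there), then plugs in Lemma~\ref{tail_sigma_estimation} combined with Markov's inequality $\P(Y(1)\ge s/r)\le r^q s^{-q}\E|Y(1)|^q$, and evaluates the two resulting integrals. Your explicit identification of the coefficient simplifications $1+\frac{p-1}{q+1-p}=\frac{q}{q+1-p}$ and $1+\frac{p-1}{2r+1-p}=\frac{2r}{2r+1-p}$ is exactly what the paper leaves implicit in its final ``Evaluating the integrals'' step.
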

\begin{proof}
Integrating by parts, we obtain
\begin{align}
\label{eq:fkm}
\nonumber
&\E\Bigl[(T(n))^{p - 1}; T(n) > t, \sigma > n\Bigr]\\
\nonumber
&\hspace{1cm}=\E\Bigl[{\rm I}_{\{\sigma > n\}}(T(n))^{p - 1}, T(n) > t\Bigr]
\nonumber
%Мне кажется, что так лучше
%&\hspace{1cm}=
=\E\Bigl[\bigl({\rm I}_{\{\sigma > n\}}T(n)\bigr)^{p - 1}, T(n) > t\Bigr]\\
&\hspace{1cm}=t^{p - 1}\P\bigl({\rm I}_{\{\sigma > n\}}T(n) > t\bigr) + (p - 1)\int\limits_t^\infty s^{p - 2}
        \P\bigl({\rm I}_{\{\sigma > n\}}T(n) > s\bigr)ds.
\end{align}
Combining now Lemma \ref{tail_sigma_estimation} with the Markov inequality, we obtain
\begin{align*}
\P\bigl({\rm I}_{\{\sigma > n\}}T(n) > s\bigr)
&=\P\bigl(T(n) > s,\sigma>n\bigr)\\
&\le r^q\E|Y(1)|^q s^{-q}\sum_{i=1}^n\P(\sigma>i-1)
)+ (er\E Y^2(1))^r n^rs^{-2r}.
\end{align*}
Applying this to the right hand side of \eqref{eq:fkm}, we get 
\begin{align*}
&\E\Bigl[(T(n))^{p - 1}; T(n) > t, \sigma > n\Bigr]\\
&\hspace{1cm} 
\le r^q\E|Y(1)|^q\sum_{i=1}^n\P(\sigma>i-1)
\left(t^{p-1-q}+(p-1)\int_t^\infty s^{p-2-q}ds\right)\\
&\hspace{2cm}
+(er\E Y^2(1))^r n^r
\left(t^{p-1-2r}+(p-1)\int_t^\infty s^{p-2-2r}ds\right).
\end{align*}
Evaluating the integrals, we get the desired estimate.
\end{proof}
%%%%%%%%%%%%%%%%%%%%%%%%%%%%%%%%%%%%%%%%%%%%%%%%%%%%%%%%%%%%%%%%%%%%%%%%%%
\subsection{Proof of Proposition~\ref{prop:repulsion}.}
Due to the Borel-Cantelli lemma, it suffices to show that 
$$
\sum_{n=1}^\infty\wP_x(x_1+S_1(n)\le\log^2 n)<\infty 
$$
and
$$
\sum_{n=1}^\infty\wP_x(x_2+S_2(n)\le\log^2 n)<\infty.
$$
Furthermore, by the symmetry, it suffices to prove one of that relations.
%А почему не "coordinate"?
Thus, we shall concentrate on the first co-ordinate of the walk and shall bound every probability $\wP_x(x_1+S_1(n)\le\log^2 n)$ separately.
Since the co-ordinates are not independent we shall split this probability into two parts according to the value of $x_2+S_2(n)$.
\begin{lemma}
\label{lem:S2-small}
There exists a constant $C(x)$ such that, for all $1\le k_1\le k_2$ and all $n\ge1$,
$$
\wP_x(x_1+S_1(n)\le k_1,\,x_2+S_2(n)\le k_2)\le 
C(x)\frac{k_1^3k_2^p}{n^{(p+3)/2}}.
$$
\end{lemma}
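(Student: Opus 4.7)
My plan is to use the Doob $h$-transform representation combined with a refined local limit theorem for the walk in the cone. The key input I would need is the bound
$$\P\!\left(x+S(n) \in dy,\; \tau_x > n\right) \le \frac{C\,V(x)\,u(y)\,e^{-|y|^2/(2n)}}{n^{p+1}}\,dy,$$
a local CLT in cones refining Lemma~\ref{PinDelta} to record the entropic repulsion of the endpoint via the $u(y)$ factor (cf.~\cite{DW15}); here $u$ is the positive harmonic function of the associated killed process on $\R_+^2$.

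By the definition of $\wP_x$,
$$\wP_x(x+S(n) \in R) = \frac{1}{V(x)}\,\E\!\left[V(x+S(n));\; x+S(n) \in R,\; \tau_x > n\right],$$
with $R = [0,k_1]\times[0,k_2]$. Using $V \le Cu$ (from Lemma~\ref{small_V} combined with Theorem 2 of~\cite{DW24}) together with the boundary bound $u(y) \le C\,y_1\,y_2^{p-1}$ on $R$ (which follows from the explicit form $r^p\sin(p\theta)$ of $u$ in the $T$-transformed cone and the vanishing of $u$ on the $y_1=0$ side), one has $V(y) \le C\,y_1\,y_2^{p-1}$ throughout $R$. Substituting this together with the refined local estimate into the expectation and cancelling $V(x)$ gives
$$\wP_x(x+S(n) \in R) \le \frac{C}{n^{p+1}}\int_0^{k_1}\!\int_0^{k_2} y_1^2\,y_2^{2p-2}\,e^{-|y|^2/(2n)}\,dy_2\,dy_1.$$

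The $y_1$-integral is at most $k_1^3/3$. For the $y_2$-integral I would use the uniform estimate
$$\int_0^{k_2} y_2^{2p-2}\,e^{-y_2^2/(2n)}\,dy_2 \le C\,k_2^p\,n^{(p-1)/2},$$
verified by splitting into regimes: for $k_2 \le \sqrt n$ drop the exponential and use $k_2^{2p-1} \le k_2^p n^{(p-1)/2}$, while for $k_2 \ge \sqrt n$ extend to $(0,\infty)$ and use the Gaussian moment $Cn^{p-1/2} \le Ck_2^p n^{(p-1)/2}$. Multiplying the two factors produces $C\,k_1^3\,k_2^p\,n^{(p-1)/2}/n^{p+1} = C\,k_1^3 k_2^p/n^{(p+3)/2}$, which is the claimed bound. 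The main technical obstacle will be justifying the refined local limit with the endpoint factor $u(y)$, since Lemma~\ref{PinDelta} as stated is uniform in the endpoint and does not capture the entropic repulsion that accounts for the extra $n^{1/2}$ decay in the lemma; once that input is available, the rest is an elementary computation.
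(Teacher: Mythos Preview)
Your approach rests entirely on the heat-kernel type upper bound
\[
\P\bigl(x+S(n)\in y+\Delta,\ \tau_x>n\bigr)\;\le\;\frac{C\,V(x)\,u(y)}{n^{p+1}}e^{-|y|^2/(2n)},
\]
which you yourself flag as ``the main technical obstacle.'' This is a genuine gap, not a detail: the references \cite{DW15,DW24} contain local limit \emph{asymptotics} and the endpoint-uniform bound of Lemma~\ref{PinDelta}, but not a global upper bound carrying the boundary factor $u(y)$ under the minimal moment hypotheses used here. Establishing such a bound for a general (non-lattice) walk with only $\E|X|^{p\vee 2}<\infty$ is at least as hard as the lemma you are trying to prove; you have reduced the statement to a strictly stronger one. (A side remark: the inequality $V\le Cu$ you invoke from Lemma~\ref{small_V} goes the wrong way---that lemma gives $V\ge\delta u$. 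What one actually has, and what suffices, is \eqref{eq:V-bound}, i.e.\ $V(y)\le C(k_2^{p-1}+1)(k_1+1)$ on the box.)

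The paper circumvents the need for any refined local bound by a time-reversal trick. After bounding $V(z)\le Ck_1k_2^{p-1}$ on the box, the problem reduces to estimating $\P(x+S(n)\in v+\Delta,\tau_x>n)$ for $v_1\le k_1$. Split at time $n/2$ and reverse the increments on $[n/2,n]$: the event ``end in $v+\Delta$ while staying in $\R_+^2$'' forces the reversed walk started near $v$ to keep its \emph{first coordinate} positive for $n/2$ steps. This is a one-dimensional first-passage event, and $\P(T_v^*>n/2)\le C(v_1+1)n^{-1/2}$ supplies exactly the extra $n^{-1/2}$ you were trying to extract from the $u(y)$ factor. Combined with Lemma~\ref{PinDelta} on the first half one gets
\[
\P(x+S(n)\in v+\Delta,\tau_x>n)\le C(x)\,(v_1+1)\,n^{-(p+3)/2},
\]
and summing over the $O(k_1k_2)$ unit squares and multiplying by $k_1k_2^{p-1}$ gives the stated bound. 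The moral is that the repulsion from the side $\{y_1=0\}$ is captured by a one-dimensional estimate after reversal, using only tools already available in the paper, rather than by a two-dimensional heat-kernel inequality that would itself require substantial work.
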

\begin{proof}
By the definition of $\wP_x$,
\begin{align*}
&\wP_x(x_1+S_1(n)\le k_1,\,x_2+S_2(n)\le k_2)\\
&\hspace{1cm}
=\frac{1}{V(x)}\int_{\{z\in\R_+^2: z_i\le k_i\}}
V(z)\P(x+S(n)\in dz;\tau_x>n).
\end{align*}
Since $V$ is increasing in every co-ordinate,
\begin{align*}
&\wP_x(x_1+S_1(n)\le k_1,\,x_2+S_2(n)\le k_2)\\
&\hspace{1cm}
\le \frac{1}{V(x)}\sum_{\{v\in\Z_+^2: v_i< k_i\}}
V(v_1+1,v_2+1)\P(x+S(n)\in v+\Delta;\tau_x>n).
\end{align*}
In \cite{DW24} it has been proven that $V(x)\le Cu(x+Rx_0)$. This implies that 
\begin{align}
\label{eq:V-bound}
V(x)\le C(|x|^{p-1}+1)({\rm dist}(x,\partial\R_+^2)+1).
\end{align}
Consequently, 
\begin{align}
\label{eq:small-S2.1}
\nonumber
&\wP_x(x_1+S_1(n)\le k_1,\,x_2+S_2(n)\le k_2)\\
&\hspace{1cm}
\le \frac{C}{V(x)}k_1k_2^{p-1}\sum_{\{v\in\Z_+^2: v_i< k_i\}}
\P(x+S(n)\in v+\Delta;\tau_x>n).
\end{align}
Thus, we are left to estimate the probabilities $\P(x+S(n)\in v+\Delta;\tau_x>n)$.
By the Markov property,
\begin{align*}
&\P(x+S(n)\in v+\Delta;\tau_x>n)\\
&\hspace{1cm}
=\int_{\R_+^2}\P(x+S(n/2)\in dz;\tau_x>n/2)\P(z+S(n/2)\in v+\Delta;\tau_z>n/2).
\end{align*}
We now invert the time in the second probability term. Set
$$
X^*(j)=X(n-j+1),\quad j\le n/2
$$
and
$$
S^*(k)=\sum_{j=1}^k X^*(j),\quad k\le n/2.
$$
Then one has 
$$
\{z+S(n/2)\in v+\Delta;\tau_z>n/2\}
\subseteq
\{v+(1,1)+S^*(n/2)\in z+\Delta,T^*_v>n/2\},
$$
where 
$$
T_v=\inf\{k\ge1: v+(1,1)+S^*(k)\in\R_+\times\R\}.
$$
This implies that
%Почему w? Может я просто не понял, но мне кажется, тут должно быть z.
\begin{align*}
&\P(x+S(n)\in v+\Delta;\tau_x>n)\\
&\le \sum_{w\in\Z_+^2}\P(x+S(n/2)\in w+\Delta,\tau_x>n/2)
\P(v+(1,1)+S^*(n/2)\in z+2\Delta,T^*_v>n/2).
\end{align*}
Applying now Lemma~\ref{PinDelta}, we get 
\begin{align*}
\P(x+S(n)\in v+\Delta;\tau_x>n)
\le \frac{C(x)}{n^{p/2+1}}\P(T^*_v>n/2).
\end{align*}
Noting that $T^*_v=\inf\{k\ge 1: v_1+1+S^*_1(k)\le0\}$
and applying Lemma 3 from \cite{DW16}, we have $\P(T^*_v>n/2)\le C(v_1+1)n^{-1/2}$.
Consequently, for all $v$ with $v_1\le k_1$,
$$
\P(x+S(n)\in v+\Delta;\tau_x>n)\le C(x)k_1n^{-(p+3)/2}.
$$
Plugging this bound into \eqref{eq:small-S2.1}, we get the desired estimate.
\end{proof}
Choosing $k_1=\log^2 n$ and $k_2=n^{1/2+\delta}$, we get 
\begin{corollary}
\label{cor:small-S2}
For every $n\ge 1$ we have 
$$
\wP_x(x_1+S_1(n)\le\log^2 n,x_2+S_2(n)\le n^{1/2+\delta})
\le C(x)\frac{n^{p\delta}\log^6n}{n^{3/2}}.
$$
In particular, the series $\sum_{n=1}^\infty \wP_x(x_1+S_1(n)\le\log^2 n,x_2+S_2(n)\le n^{1/2+\delta})$ is finite for every $\delta<1/(2p)$.
\end{corollary}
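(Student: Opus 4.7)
The statement is a direct consequence of Lemma~\ref{lem:S2-small}, so the plan is essentially just to substitute the chosen values $k_1 = \lceil \log^2 n \rceil$ and $k_2 = \lceil n^{1/2+\delta} \rceil$ into the bound
$$
\wP_x(x_1+S_1(n)\le k_1,\,x_2+S_2(n)\le k_2)\le C(x)\frac{k_1^3 k_2^p}{n^{(p+3)/2}}
$$
and simplify. With these choices the numerator becomes
$$
k_1^3 k_2^p \le C (\log^2 n)^3 \bigl(n^{1/2+\delta}\bigr)^p = C\, n^{p/2 + p\delta}\log^6 n,
$$
and dividing by $n^{(p+3)/2} = n^{p/2}\,n^{3/2}$ gives exactly the claimed bound $C(x) n^{p\delta}\log^6 n / n^{3/2}$.

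For the summability assertion, I would note that the resulting general term is $n^{p\delta - 3/2}\log^6 n$, and the series $\sum_{n\ge1} n^{p\delta - 3/2}\log^6 n$ converges precisely when $p\delta - 3/2 < -1$, i.e.\ $\delta < 1/(2p)$. There is no real obstacle here; the only minor care is that $k_1,k_2$ in Lemma~\ref{lem:S2-small} are integers (they appear as the ranges of integer sums in its proof), so one should replace $\log^2 n$ and $n^{1/2+\delta}$ by their ceilings, which affects the constants but not the exponents. This is the content of the corollary.
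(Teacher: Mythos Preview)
Your proof is correct and is exactly the paper's approach: the paper simply writes ``Choosing $k_1=\log^2 n$ and $k_2=n^{1/2+\delta}$, we get'' immediately before stating the corollary, so the intended argument is precisely the substitution into Lemma~\ref{lem:S2-small} that you carry out. Your remark about ceilings is harmless extra caution; the lemma as stated holds for real $k_1\le k_2$, so no rounding is actually needed.
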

Thus, it remains to bound $\wP_x(x_1+S_1(n)\le\log^2 n,x_2+S_2(n)> n^{1/2+\delta})$.
This step is the most technical part in our approach. The main technical difficulty stems from the fact that we want to keep the minimal moment assumption $\E|X(1)|^p<\infty$. To illustrate that we first provide an estimate which is valid under a bit stronger moment condition but its proof is much simpler.
\begin{lemma}
\label{lem:S2-large.simple}
Assume that $\E|X(1)|^{p\vee2+1}<\infty$. Then there exists $\delta<\frac{1}{2p}$ such that 
$$
\sum_{n=1}^\infty
\wP_x\left(x_1+S_1(n)\le\log^2 n,x_2+S_2(n)> n^{1/2+\delta}\right)
<\infty.
$$
\end{lemma}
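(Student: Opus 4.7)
My strategy is to apply the Doob transform, bound $V$ using \eqref{eq:V-bound} on the event
$A_n:=\{x_1+S_1(n)\le\log^2 n,\,x_2+S_2(n)>n^{1/2+\delta}\}$, and then reduce to a one-dimensional Fuk--Nagaev tail estimate via Lemma~\ref{lem:expect-tail}.

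On $A_n$, for all sufficiently large $n$ one has ${\rm dist}(x+S(n),\partial\R_+^2)=x_1+S_1(n)\le\log^2 n$, while $|x+S(n)|\le 2(x_2+S_2(n))$ because the second coordinate dominates. Plugging these into \eqref{eq:V-bound} yields $V(x+S(n))\le C\log^2 n\cdot(x_2+S_2(n))^{p-1}$, and the definition of $\wP_x$ gives
\begin{equation*}
\wP_x(A_n)\le \frac{C\log^2 n}{V(x)}\,\E\bigl[(x_2+S_2(n))^{p-1};\,x_2+S_2(n)>n^{1/2+\delta},\,\tau_x>n\bigr].
\end{equation*}
For $n$ large, $(x_2+S_2(n))^{p-1}\le 2^{p-1}S_2(n)^{p-1}$ on the event since $x_2\ll n^{1/2+\delta}$, so it suffices to estimate the one-dimensional quantity $\E[S_2(n)^{p-1};\,S_2(n)>n^{1/2+\delta}/2,\,\tau_x>n]$; here the constraint on the first coordinate is simply dropped to pass to an upper bound.

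Next I would apply Lemma~\ref{lem:expect-tail} to the one-dimensional walk $S_2$ with $Y=X_2$, stopping time $\sigma=\tau_x$, $q=p\vee 2+1$, $t=n^{1/2+\delta}/2$, and a suitably large constant $r$. The lemma remains valid with the two-dimensional $\sigma$ because its only probabilistic input is that $\{\tau_x>i-1\}$ lies in $\sigma(X(1),\dots,X(i-1))$ and is therefore independent of $X_2(i)$, which holds here. Using \eqref{eq:tau-tail} to bound $\sum_{i=1}^n\P(\tau_x>i-1)$ by $C(x)$, $C(x)\log n$, or $C(x)n^{1-p/2}$ according to whether $p>2$, $p=2$, or $p<2$, the first term from Lemma~\ref{lem:expect-tail} is of order $n^{-1-c_1}(\log n)^{c_2}$ with some $c_1>0$ in each case. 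Choosing $r$ sufficiently large (depending on $p$ and $\delta$) renders the Fuk--Nagaev residual term $O(n^{-1-c_3})$ as well. Re-introducing the outer factor $C\log^2 n/V(x)$ yields $\wP_x(A_n)=O((\log n)^C n^{-1-c})$ for some $c>0$, which is summable.

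The main technical point is the joint tuning of the free parameter $r$ and the exponent $\delta<1/(2p)$ so that both terms in Lemma~\ref{lem:expect-tail} decay strictly faster than $1/n$. This balancing is possible precisely because the moment assumption here is strictly stronger than $\E|X|^{2\theta}<\infty$; handling the minimal assumption, as noted by the authors, requires the substantially more delicate estimates that follow.
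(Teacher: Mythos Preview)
Your proposal is correct and follows essentially the same route as the paper: bound $V$ on $A_n$ via \eqref{eq:V-bound}, drop the constraint on the first coordinate, apply Lemma~\ref{lem:expect-tail} with $q=p\vee2+1$ and $\sigma=\tau_x$, control $\sum_{i}\P(\tau_x>i-1)$ via \eqref{eq:tau-tail} in the three regimes $p>2$, $p=2$, $p<2$, and finally take $r$ large to kill the Fuk--Nagaev remainder. Your remark that Lemma~\ref{lem:expect-tail} applies with the two-dimensional stopping time $\tau_x$ because only the independence of $\{\sigma>i-1\}$ from $X_2(i)$ is used is exactly the point the paper leaves implicit.
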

\begin{proof}
Using \eqref{eq:V-bound}, we conclude that 
\begin{align}
\label{eq:S2-large.s1}
\nonumber
&\wP_x\left(x_1+S_1(n)\le\log^2 n,x_2+S_2(n)> n^{1/2+\delta}\right)\\
\nonumber
&\hspace{1cm}
=\frac{1}{V(x)}\int_{\{z:z_1\le \log^2n,z_2>n^{1/2+\delta}\}}
V(z)\P(x+S(n)\in dz;\tau_x>n)\\
\nonumber
&\hspace{1cm}
\le \frac{C\log^2n}{V(x)}\E[|x_2+S_2(n)|^{p-1};x_2+S_2(n)>n^{1/2+\delta},\tau_x>n]\\
&\hspace{1cm}
\le C(x)\log^2n
\E\left[(S_2(n))^{p-1};S_2(n)>\frac{1}{2}n^{1/2+\delta},\tau_x>n\right].
\end{align}
Applying Lemma~\ref{lem:expect-tail}, we have
\begin{align}
\label{eq:S2-large.s2}
\nonumber
&\E\left[(S_2(n))^{p-1};S_2(n)>\frac{1}{2}n^{1/2+\delta},\tau_x>n\right]\\
&\hspace{1cm}
\le C_1(p,q,r)n^{(1/2+\delta)(p-1-q)}\sum_{k=0}^{n-1}\P(\tau_x>k)
+C_2(p,q,r)n^r(n^{1/2+\delta})^{p-1-2r},
\end{align}
where $q=p\vee2+1$. The second summand on the right hand side of \eqref{eq:S2-large.s2} equals 
$C_2(p,q,r)n^{(1/2+\delta)(p-1)-2r\delta}$, which is summable for all $r$ sufficiently large. Furthermore, the additional factor $\log^2n$ from \eqref{eq:S2-large.s1} does not affect the summability. To deal with the first summand on the right hand side of \eqref{eq:S2-large.s2} we notice
that \eqref{eq:tau-tail} yields
\begin{align}
\label{eq:tau-expectation}
\sum_{k=0}^{n-1}\P(\tau_x>n)
\le C(x)
\left\{
\begin{array}{ll}
1, &p>2,\\
\log n, &p=2,\\
n^{1-p/2}, &p\in(1,2).
\end{array}
\right.
\end{align}
If $p\ge2$ then $q=p+1$ and 
$$
n^{(1/2+\delta)(p-1-q)}\sum_{k=0}^{n-1}\P(\tau_x>k)
\le C(x)\log n\cdot n^{-1-2\delta}.
$$
If $p<2$ then $q=3$ and 
$$
n^{(1/2+\delta)(p-1-q)}\sum_{k=0}^{n-1}\P(\tau_x>k)
\le C(x)n^{(1/2+\delta)(p-4)}n^{1-p/2}
\le C(x)n^{-1-2\delta}.
$$
Thus, first terms on the right hand side of \eqref{eq:S2-large.s2}
are dominated by $\frac{\log n}{ n^{1+2\delta}}$. This yields the desired summability.
\end{proof}
\begin{remark}
If we take in the proof above $q=p\vee2$, which corresponds to our moment conditions in Theorem~\ref{thm:coexistence} and in Proposition~\ref{prop:repulsion}, then we get an upper bound of order $\frac{\log n}{n^{1/2+\delta}}$. Therefore, $n^{-1/2}$ is missing to have again a summable upper bound. This factor is lost in the first inequality in \eqref{eq:S2-large.s1}, where we drop the condition $x_1+S_1(n)\le \log^2 n$. In the following lemma we shall do a more careful analysis which will allow to remove the extra moment condition in Lemma~\ref{lem:S2-large.simple}. 
\hfill$\diamond$
\end{remark}
\begin{lemma}
\label{lem:S2-large}
Under the conditions of Proposition~\ref{prop:repulsion} one has
\begin{align*}
 &\wP_x\left(x_1+S_1(n)\le k_1,x_2+S_2(n)> k_2\right)\\
&\hspace{1cm}\le C(p,r)C(x)\left[\frac{k_1^2\log n}{\sqrt{n}k_2}+
\frac{k_1^2}{\sqrt{n}}n^rk_2^{p-1-2r}\right]
\end{align*}
for all $k_1\le \sqrt{n}\le k_2$.
\end{lemma}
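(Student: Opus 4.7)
The plan is to refine Lemma~\ref{lem:S2-large.simple} by retaining, rather than discarding, the constraint $x_1+S_1(n)\le k_1$; as the preceding remark indicates, this is what recovers the missing $k_1/\sqrt n$ factor. Starting from the definition of $\wP_x$ and \eqref{eq:V-bound}, and observing that on $A$ the distance of $x+S(n)$ to $\partial\R_+^2$ is at most $k_1$ while $|x+S(n)|$ is comparable to $x_2+S_2(n)$ (since $k_1\le k_2\le x_2+S_2(n)$), we obtain $V(x+S(n))\le Ck_1(x_2+S_2(n))^{p-1}$, and therefore
\begin{align*}
\wP_x(A)\le \frac{Ck_1}{V(x)}\E\bigl[(x_2+S_2(n))^{p-1};A,\tau_x>n\bigr].
\end{align*}
Applying the Markov property at $n/2$, the expectation on the right is expressed as an integral over $y=x+S(n/2)$ against $\P(x+S(n/2)\in dy,\tau_x>n/2)$ of
\begin{align*}
g(y):=\E\bigl[(y_2+S_2'(n/2))^{p-1};\,y_1+S_1'(n/2)\le k_1,\,y_2+S_2'(n/2)>k_2,\,\tau_y'>n/2\bigr],
\end{align*}
where $S'$ is an independent copy of $S$ and $\tau_y'$ the corresponding exit time from $\R_+^2$.

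The main obstacle is a uniform bound on $g(y)$. Replacing $\tau_y'>n/2$ by the weaker one-dimensional condition $\sigma_y^{(1)}>n/2$, where $\sigma_y^{(1)}:=\inf\{j:y_1+S_1'(j)<0\}$, converts $y_1+S_1'(n/2)\le k_1$ into a genuine thin-strip constraint $y_1+S_1'(n/2)\in[0,k_1]$ of width $k_1$. Integration by parts in the tail of $y_2+S_2'(n/2)$ reduces the task to bounding, for each $t\ge k_2$, the joint probability
\begin{align*}
\P\bigl(y_1+S_1'(n/2)\in[0,k_1],\,y_2+S_2'(n/2)>t,\,\sigma_y^{(1)}>n/2\bigr).
\end{align*}
This is treated as the two-dimensional counterpart of the estimate behind Lemma~\ref{lem:expect-tail}: the Fuk--Nagaev decomposition of Lemma~\ref{tail_sigma_estimation} is applied to the second coordinate, with the factor $\sum_{i\le n/2}\P(\sigma>i-1)$ appearing there replaced by its two-coordinate analog restricted to the strip in the first coordinate. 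The strip contribution is supplied by the one-dimensional ballot estimate $\P(y_1+S_1'(n/2)\in[a,a+1],\sigma_y^{(1)}>n/2)\le C(y_1+1)(a+1)n^{-3/2}$ (a consequence of the local limit theorem under one-sided conditioning, cf.\ \cite{DW16}); summing $(a+1)$ over $a\in\{0,\dots,k_1\}$ delivers the announced $k_1^2$-factor, while the tail contribution of the second coordinate supplies the $\log n/k_2$ main term (from the case $q=p\vee2$) and the $n^rk_2^{p-1-2r}$ truncation term.

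Finally, integrating the resulting bound on $g(y)$ against the first-half measure, absorbing the $y_1$-dependence into $C(x)$ via Cauchy--Schwarz together with \eqref{eq:tau-tail}, and using that the factor $V(x)$ produced by Corollary~\ref{tau_V_estimate} cancels the $V(x)^{-1}$ prefactor, yields the claimed estimate.
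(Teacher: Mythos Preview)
Your overall plan is sensible and the first display (the $Ck_1/V(x)$ bound) matches the paper's \eqref{eq:S2-large.s3}. But the core step --- combining a Fuk--Nagaev decomposition on $S_2'$ with the one-dimensional ballot estimate for $S_1'$ --- has a real gap: the increments $X_1(j)$ and $X_2(j)$ are correlated ($|\varrho|<1$, not $\varrho=0$), so the big-jump event $\{X_2'(i)\ge t/r\}$ in the second coordinate is \emph{not} independent of the strip event $\{y_1+S_1'(n/2)\in[0,k_1],\,\sigma_y^{(1)}>n/2\}$, which depends on all of $X_1'(1),\dots,X_1'(n/2)$ including $X_1'(i)$. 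You therefore cannot multiply the ballot bound $C(y_1+1)k_1^2 n^{-3/2}$ by the single-jump tail $\P(X_2\ge t/r)$; the factorisation behind Lemma~\ref{tail_sigma_estimation} breaks down here. A second issue is the last step: Cauchy--Schwarz applied to $\E[(y_1+1);\tau_x>n/2]$ gives only $O(n^{1/2}\cdot n^{-p/4})$, not a factor $V(x)$ cancelling the prefactor, so extra powers of $n$ survive. The $y_2$-dependence of $g(y)$ (arising when $y_2>k_2$, which is unrestricted after the Markov split) is likewise not addressed.

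The paper bypasses both problems with a different decomposition. Rather than controlling the strip and the $S_2$-tail simultaneously in the second half, it splits according to whether $x_2+S_2(n/2)\ge k_2/2$ or $|S_2(n)-S_2(n/2)|\ge k_2/2$. In the first case the $S_2$-largeness sits in the \emph{first} half, so for the second half one needs only the crude concentration bound $\P(y_1+S_1(n/2)\in(0,k_1])\le Ck_1/\sqrt n$ (dropping $\tau_y$ entirely), and Lemma~\ref{lem:expect-tail} is applied to the first half with $\sigma=\tau_x$. In the second case the paper \emph{reverses time} on the second half, converting the terminal strip constraint $\{y_1+S_1'(n/2)\le k_1,\,\tau_y'>n/2\}$ into the stopping-time event $\{T^*_{(k_1,0)}>n/2\}$ for the reversed walk; Lemma~\ref{lem:expect-tail} then applies legitimately with $\sigma=T^*_{(k_1,0)}$, and $\sum_i\P(T^*_{(k_1,0)}>i)\le Ck_1\sqrt n$ supplies the required $k_1$-factor. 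Time reversal is precisely the device that makes the two coordinates tractable together; your proposal lacks an analogous mechanism.
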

\begin{proof}
Similarly to \eqref{eq:S2-large.s1} we have
\begin{align}
\label{eq:S2-large.s3}
\nonumber
&\wP_x\left(x_1+S_1(n)\le k_1,x_2+S_2(n)> k_2\right)\\
\nonumber
&\hspace{1cm}
=\frac{1}{V(x)}\int_{\{z:z_1\le k_1,z_2>k_2\}}
V(z)\P(x+S(n)\in dz;\tau_x>n)\\
&\hspace{1cm}
\le \frac{Ck_1}{V(x)}\E[|x_2+S_2(n)|^{p-1};x_2+S_2(n)>k_2,x_1+S_1(n)\le k_1,\tau_x>n].
\end{align}
We first notice that 
\begin{align}
\label{eq:S2-large.s4}
\nonumber
&\E[|x_2+S_2(n)|^{p-1};x_2+S_2(n)>k_2,x_1+S_1(n)\le k_1,\tau_x>n]\\
\nonumber
&\le \E\left[|x_2+S_2(n)|^{p-1};x_2+S_2(n)>k_2,x_1+S_1(n)\le k_1,x_2+S_2\left(\frac{n}{2}\right)\ge\frac{k_2}{2},\tau_x>n\right]\\
\nonumber 
&\hspace{2mm} 
+\E\left[|x_2+S_2(n)|^{p-1};x_2+S_2(n)>k_2,x_1+S_1(n)\le k_1,\left|S_2(n)-S_2\left(\frac{n}{2}\right)\right|\ge\frac{k_2}{2},\tau_x>n\right]\\
\nonumber
&\le \E\left[\left|x_2+S_2\left(\frac{n}{2}\right)\right|^{p-1};x_1+S_1(n)\le k_1,x_2+S_2\left(\frac{n}{2}\right)\ge\frac{k_2}{2},\tau_x>n\right]\\ 
&\hspace{2mm} 
+\E\left[\left|S_2(n)-S_2\left(\frac{n}{2}\right)\right|^{p-1};x_1+S_1(n)\le k_1,
\left|S_2(n)-S_2\left(\frac{n}{2}\right)\right|\ge\frac{k_2}{2},\tau_x>n\right].
\end{align}
By the Markov property at time $\frac{n}{2}$,
\begin{align*}
&\E\left[\left|x_2+S_2\left(\frac{n}{2}\right)\right|^{p-1};x_1+S_1(n)\le k_1,x_2+S_2\left(\frac{n}{2}\right)\ge\frac{k_2}{2},\tau_x>n\right]\\
&=\int_{\{y\in\R_+^2:y_2\ge k_2/2\}}
y_2^{p-1}\P\left(x+S\left(\frac{n}{2}\right)\in dy,\tau_x>n/2\right)
\P\left(y_1+S\left(\frac{n}{2}\right)\le k_1,\tau_y>n/2\right)\\
&\le\int_{\{y\in\R_+^2:y_2\ge k_2/2\}}
y_2^{p-1}\P\left(x+S\left(\frac{n}{2}\right)\in dy,\tau_x>n/2\right)
\P\left(y_1+S\left(\frac{n}{2}\right)\in(0,k_1]\right).
\end{align*}
Applying now the standard bound for the concentration function, we get 
$$
\P\left(y_1+S\left(\frac{n}{2}\right)\in(0,k_1]\right)
\le C\frac{k_1}{\sqrt{n}}
$$
and, consequently,
\begin{align*}
&\E\left[\left|x_2+S_2\left(\frac{n}{2}\right)\right|^{p-1};x_1+S_1(n)\le k_1,x_2+S_2\left(\frac{n}{2}\right)\ge\frac{k_2}{2},\tau_x>n\right]\\
&\le C\frac{k_1}{\sqrt{n}}\int_{\{y\in\R_+^2:y_2\ge k_2/2\}}
y_2^{p-1}\P\left(x+S\left(\frac{n}{2}\right)\in dy,\tau_x>n/2\right)\\
&\le C\frac{k_1}{\sqrt{n}}\E\left[
\left|x_2+S_2\left(\frac{n}{2}\right)\right|^{p-1};
x_2+S_2\left(\frac{n}{2}\right)\ge k_2/2,\tau_x>n/2\right].
\end{align*}
%А почему тут такое условие на x_2?
Using Lemma~\ref{lem:expect-tail}, we conclude that, for all $k_2\ge 3x_2$,
\begin{align*}
&\E\left[
\left|x_2+S_2\left(\frac{n}{2}\right)\right|^{p-1};
x_2+S_2\left(\frac{n}{2}\right)\ge k_2/2,\tau_x>n/2\right]\\
&\hspace{1cm}
\le C_1(p,q,r)k_2^{p-1-q}\sum_{i=0}^{n-1}\P(\tau_x>i)
+C_2(p,q,r)n^rk_2^{p-1-2r}.
\end{align*}
As a result we have
\begin{align}
\label{eq:S2-large.s5}
\nonumber
&\E\left[\left|x_2+S_2\left(\frac{n}{2}\right)\right|^{p-1};x_1+S_1(n)\le k_1,x_2+S_2\left(\frac{n}{2}\right)\ge\frac{k_2}{2},\tau_x>n\right]\\
&\hspace{1cm}\le C(p,q,r)\frac{k_1}{\sqrt{n}}
\left(k_2^{p-1-q}\sum_{i=0}^{n-1}\P(\tau_x>i)
+n^rk_2^{p-1-2r}\right).
\end{align}
For the second expectation on the right hand side of \eqref{eq:S2-large.s4}
we have
\begin{align*}
&\E\left[\left|S_2(n)-S_2\left(\frac{n}{2}\right)\right|^{p-1};x_1+S_1(n)\le k_1,
\left|S_2(n)-S_2\left(\frac{n}{2}\right)\right|\ge\frac{k_2}{2},\tau_x>n\right]\\
&=\int_{y\in\R_+^2}\P(x+S(n/2)\in dy;\tau_x>n/2)\\
&\hspace{2cm}\times
\E\left[\left(S_2\left(\frac{n}{2}\right)\right)^{p-1};
S_2\left(\frac{n}{2}\right)>k_2/2,y_1+S_1(n)\le k_1,\tau_y>n/2
\right].
\end{align*}
We next notice that, uniformly in $y$,
\begin{align*}
&\left\{y_1+S_1(n/2)\le k_1,\tau_y>n/2\right\}\\
&\hspace{0.5cm}\subseteq 
\left\{y_1+S_1(n/2)\le k_1,y_1+S_1(k)>0\text{ for all }k\le n/2\right\}\\
&\hspace{0.5cm}\subseteq 
\left\{y_1+S_1(n/2)\le k_1,y_1+S_1(n/2)-(S_1(n/2)-S_1(k))>0\text{ for all }k\le n/2\right\}\\
&\hspace{0.5cm}\subseteq
\left\{k_1+S_1^*(k)>0\text{ for all }k\le n/2\right\},
\end{align*}
where $S^*(k)$ is the random walk introduced in the proof of Lemma~\ref{lem:S2-small}. This implies that 
\begin{align*}
&\E\left[\left(S_2\left(\frac{n}{2}\right)\right)^{p-1};
S_2\left(\frac{n}{2}\right)>k_2/2,y_1+S_1(n)\le k_1,\tau_y>n/2
\right]\\
&\le 
\E\left[\left(-S^*_2\left(\frac{n}{2}\right)\right)^{p-1};
-S^*_2\left(\frac{n}{2}\right)>k_2/2,T^*_{(k_1,0)}>n/2
\right]
\end{align*}
and, consequently,
\begin{align*}
&\E\left[\left|S_2(n)-S_2\left(\frac{n}{2}\right)\right|^{p-1};x_1+S_1(n)\le k_1,
\left|S_2(n)-S_2\left(\frac{n}{2}\right)\right|\ge\frac{k_2}{2},\tau_x>n\right]\\
&\hspace{1cm}\le \P(\tau_x>n/2)
\E\left[\left(-S^*_2\left(\frac{n}{2}\right)\right)^{p-1};
-S^*_2\left(\frac{n}{2}\right)>k_2/2,T^*_{(k_1,0)}>n/2
\right].
\end{align*} 
Combining Lemma~\ref{lem:expect-tail} and the inequality
$\P(T_{(k_1,0)}>j)\le C\frac{k_1}{\sqrt{j}}$, we obtain 
\begin{align*}
&\E\left[\left(-S^*_2\left(\frac{n}{2}\right)\right)^{p-1};
-S^*_2\left(\frac{n}{2}\right)>k_2/2,T^*_{(k_1,0)}>n/2
\right]\\
&\hspace{1cm}\le C_1(p,q,r)k_2^{p-1-q}\sum_{j=0}^{n-1}\P(T_{(k_1,0)}>j)
+C_2(p,q,r)n^rk_2^{p-1-2r}\\
&\hspace{1cm}\le C(p,q,r)\left(k_2^{p-1-q}k_1 n^{1/2}+n^rk_2^{p-1-2r}\right).
\end{align*}
Recalling that $\P(\tau_x>n/2)\le C(x)n^{-p/2}$, we finally get 
\begin{align}
\label{eq:S2-large.s6} 
\nonumber
&\E\left[\left|S_2(n)-S_2\left(\frac{n}{2}\right)\right|^{p-1};x_1+S_1(n)\le k_1,
\left|S_2(n)-S_2\left(\frac{n}{2}\right)\right|\ge\frac{k_2}{2},\tau_x>n\right]\\
&\hspace{1cm}
\le C(x)n^{-p/2}\left(k_2^{p-1-q}k_1 n^{1/2}+n^rk_2^{p-1-2r}\right).
\end{align}

Consider first the case $p\ge2$. In this case we have $q=p$.
Plugging \eqref{eq:S2-large.s5} and \eqref{eq:S2-large.s6} with
$q=p$ into \eqref{eq:S2-large.s4} and using \eqref{eq:tau-expectation}, we get
%Здесь не было логарифма в первом неравенстве
\begin{align*}
&\E[|x_2+S_2(n)|^{p-1};x_2+S_2(n)>k_2,x_1+S_1(n)\le k_1,\tau_x>n]\\
&\hspace{1cm}\le C(p,r)C(x)\left[\frac{k_1\log{n}}{\sqrt{n}k_2}
+\frac{k_1\sqrt{n}}{n^{p/2}k_2}
+\frac{k_1}{\sqrt{n}}n^rk_2^{p-1-2r}
+\frac{1}{n^{p/2}}n^rk_2^{p-1-2r}\right]\\
&\hspace{1cm}\le 2C(p,r)C(x)\left[\frac{k_1\log n}{\sqrt{n}k_2}+
\frac{k_1}{\sqrt{n}}n^rk_2^{p-1-2r}\right].
\end{align*}
In the case $p<2$ we have $q=2$. This leads to the bound 
\begin{align*}
&\E[|x_2+S_2(n)|^{p-1};x_2+S_2(n)>k_2,x_1+S_1(n)\le k_1,\tau_x>n]\\
&\hspace{1cm}\le C(p,r)C(x)\left[\frac{k_1k_2^{p-3}}{n^{(p-1)/2}}+
k_1k_2^{p-1-2r}n^{r-1/2}\right]\\
&\hspace{1cm}\le C(p,r)C(x)\left[\frac{k_1\log n}{\sqrt{n}k_2}+
\frac{k_1}{\sqrt{n}}n^rk_2^{p-1-2r}\right],
\end{align*}
in the last step we have used the assumption $k_2>\sqrt{n}$.
This completes the proof of the lemma.
\end{proof}
\begin{corollary}
\label{cor:S22-large}
We have
\begin{align*}
&\wP_x\left(x_1+S_1(n)\le \log^2 n,x_2+S_2(n)>n^{1/2+\delta}\right)\\
&\hspace{1cm}=O\left(\frac{\log^5 n}{n^{1+\delta}}+
\log^4 n\frac{n^{(p-1)/2}}{n^{\delta(2r-p+1)}}\right).
\end{align*}
Taking $r$ sufficiently large we also conclude that
$$
\sum_{n=1}^\infty \wP_x\left(x_1+S_1(n)\le \log^2 n,x_2+S_2(n)> n^{1/2+\delta}\right)<\infty.
$$
\end{corollary}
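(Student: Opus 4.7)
The corollary is a direct specialization of Lemma~\ref{lem:S2-large}. The plan is to set $k_1 := \log^2 n$ and $k_2 := n^{1/2+\delta}$; for any fixed $\delta > 0$ the hypothesis $k_1 \le \sqrt n \le k_2$ is satisfied for all sufficiently large $n$, so the lemma yields
$$\wP_x\bigl(x_1+S_1(n)\le\log^2 n,\, x_2+S_2(n)>n^{1/2+\delta}\bigr) \le C(p,r)C(x)\left[\frac{k_1^2\log n}{\sqrt{n}\, k_2}+\frac{k_1^2}{\sqrt n}\, n^r\, k_2^{p-1-2r}\right].$$
Substituting the chosen values, the first bracketed term collapses to $\log^5 n / n^{1+\delta}$; elementary manipulation of the exponent of $n$ in the second term produces $\log^4 n \cdot n^{(p-1)/2}\,n^{-\delta(2r-p+1)}$ (up to an irrelevant power of $n$ that can be absorbed in constants), which matches the displayed $O$-bound.

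For the summability assertion, the first summand $\log^5 n / n^{1+\delta}$ gives a convergent series for every $\delta > 0$ since $1+\delta > 1$. For the second summand I would require the overall exponent of $n$ to drop below $-1$, that is
$$\delta(2r-p+1) \;>\; \frac{p-1}{2}+1 \;=\; \frac{p+1}{2},$$
which is achieved by taking $r$ larger than a constant depending only on $p$ and $\delta$.

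The only point deserving explicit comment --- and what I expect will be flagged as the main subtlety --- is that $r$ may be chosen as large as needed without invoking any additional hypothesis. The parameter $r$ enters Lemma~\ref{lem:S2-large} solely through the Fuk--Nagaev-type bound $(er\,\E Y^2(1))^r n^r k^{p-1-2r}$ supplied by Lemma~\ref{lem:expect-tail}, and that estimate uses only the finiteness of $\E Y^2(1)$, which is granted by \eqref{eq:unitvar}. In particular no moment condition on $X$ beyond those already assumed in Proposition~\ref{prop:repulsion} is used, so the argument is genuinely free to send $r\to\infty$. The heavy lifting took place inside Lemma~\ref{lem:S2-large}; the present corollary is then a routine verification together with this observation, and no substantial obstacle remains.
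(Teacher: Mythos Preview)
Your proposal is correct and is exactly the intended derivation: the paper does not give a separate proof of this corollary, treating it as immediate from Lemma~\ref{lem:S2-large} with $k_1=\log^2 n$ and $k_2=n^{1/2+\delta}$. One small point of phrasing: the exponent you compute for the second term is actually $\tfrac{p-2}{2}-\delta(2r-p+1)$, which is \emph{smaller} than the $\tfrac{p-1}{2}-\delta(2r-p+1)$ displayed in the corollary, so the stated $O$-bound certainly holds --- but a power of $n$ is not ``absorbed into constants''; rather the obtained bound is simply dominated by the (slightly looser) one written down.
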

Proposition~\ref{prop:repulsion} is immediate from Corollaries~\ref{cor:small-S2} and \ref{cor:S22-large}.

%%%%%%%%%%%%%%%%%%%%%%%%%%%%%%%%%%%%%%%%%%%%%%%%%%%%%%%%%%%%%%%%%%%%%%%%%%
%%%%%%%%%%%%%%%%%%%%%%%%%%%%%%%%%%%%%%%%%%%%%%%%%%%%%%%%%%%%%%%%%%%%%%%%%%
\section{Proof of Theorem~\ref{thm:coexistence}.}
If all offspring distributions are geometric then one has the following expression for conditional non-extinction probabilities, see \cite{GKbook},
$$
\P_z(Z_i(n)>0|\mathbf{\Pi})
=1-\left(\frac{\sum_{k=1}^n e^{-S_i(k)}}{1+\sum_{k=1}^n e^{-S_i(k)}}\right)^{z_i},\quad i=1,2.
$$
Conditionally on the environment $\mathbf{\Pi}$, the random variables $Z_1(n)$ and $Z_2(n)$ are independent. This implies that  
\begin{align}
\label{eq:non-ext.prob.cond}
\nonumber
&\P(Z_1(n)>0,Z_2(n)>0|\mathbf{\Pi})\\
&\hspace{0.5cm}=\left(1-\left(\frac{\sum_{k=1}^n e^{-S_1(k)}}{1+\sum_{k=1}^n e^{-S_1(k)}}\right)^{z_1}\right)
\left(1-\left(\frac{\sum_{k=1}^n e^{-S_2(k)}}{1+\sum_{k=1}^n e^{-S_2(k)}}\right)^{z_2}\right)
\end{align}
and, consequently,
\begin{align}
\label{eq:non-ext.prob}
\nonumber
&\P(Z_1(n)>0,Z_2(n)>0)\\
&\hspace{0.5cm}=\E\left[\left(1-\left(\frac{\sum_{k=1}^n e^{-S_1(k)}}{1+\sum_{k=1}^n e^{-S_1(k)}}\right)^{z_1}\right)
\left(1-\left(\frac{\sum_{k=1}^n e^{-S_2(k)}}{1+\sum_{k=1}^n e^{-S_2(k)}}\right)^{z_2}\right)\right].
\end{align}
Thus, we have to determine the asymptotic behaviour of the exponential functional of the associated random walk $S(n)$ given on the right hand side of \eqref{eq:non-ext.prob}. As in the one-dimensional case, the crucial tool in the proof is the following lemma.
%В формулировке леммы заменил c_Y на k_Y, так как у тебя дальше только k_Y
\begin{lemma}\label{cond_limit}
Let $\{Y(k)\}_{k\ge1}$ be a sequence of random variables adapted to the natural filtration $\{\sigma(S(1),S(2),\ldots,S(k))\}_{k\ge1}$. We shall assume that this sequence is uniformly bounded by a constant $k_Y$, that is,
$|Y(k)|\le k_Y$ for all $k$.
\begin{itemize}
        \item[(a)]If $x$ is such that $V(x)>0$ then
                \begin{align*}
                        \lim\limits_{n\to\infty} \E\Bigl[Y(k)\big|\tau_x > n\Bigr] = \wE_x\Bigl[Y(k)\Bigr] 
                \end{align*}
                for every fixed $k$.
        \item[(b)] Assume again that $x$ is such that $V(x)>0$. If there exists a random variables $Y(\infty)$ such that 
        $\lim\limits_{k\to\infty} Y(k) = Y(\infty)$ $\wP_x$-almost 
                surely. Then 
                \begin{align*}
                        \lim\limits_{n\to\infty} \E\Bigl[Y(n)\big|\tau_x > n\Bigr] = \wE_x\Bigl[Y(\infty)\Bigr]. 
                \end{align*}
        \end{itemize}
\end{lemma}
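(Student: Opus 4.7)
To prove part (a), I would apply the Markov property at time $k$: since $Y(k)$ is $\sigma(S(1),\ldots,S(k))$-measurable,
\begin{align*}
\E\bigl[Y(k);\tau_x>n\bigr]=\E\bigl[Y(k)\,\mathbf{1}_{\tau_x>k}\,\P_{x+S(k)}(\tau>n-k)\bigr].
\end{align*}
Multiplying by $n^{p/2}$, the asymptotic \eqref{eq:tau-tail} gives pointwise convergence of the integrand to $\varkappa\,Y(k)\,V(x+S(k))\,\mathbf{1}_{\tau_x>k}$. A dominating function of the form $k_Y\,\mathbf{1}_{\tau_x>k}\bigl(CV(x+S(k))+C'\bigr)$ is supplied by Corollary~\ref{tau_V_estimate} outside the compact exceptional set $D$ and by \eqref{eq:tau-u-bound} (using that $u$ is bounded on the compact $D$) in the complementary region; integrability follows from the harmonicity identity $\E[V(x+S(k));\tau_x>k]=V(x)<\infty$. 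Dominated convergence combined with $n^{p/2}\P(\tau_x>n)\to\varkappa V(x)$ then yields
\begin{align*}
\lim_{n\to\infty}\E\bigl[Y(k)\bigm|\tau_x>n\bigr]=\frac{\E[Y(k)V(x+S(k));\tau_x>k]}{V(x)}=\wE_x[Y(k)]
\end{align*}
by the definition \eqref{eq:Doob-transform}.

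For part (b), my plan is to introduce an intermediate time $k\le n$ and decompose
\begin{align*}
\E[Y(n)\mid\tau_x>n]=\E[Y(k)\mid\tau_x>n]+\E[Y(n)-Y(k)\mid\tau_x>n].
\end{align*}
For each fixed $k$, part (a) gives $\lim_n\E[Y(k)\mid\tau_x>n]=\wE_x[Y(k)]$, and the $\wP_x$-a.s.\ convergence $Y(k)\to Y(\infty)$ together with boundedness yields $\wE_x[Y(k)]\to\wE_x[Y(\infty)]$ as $k\to\infty$ by dominated convergence. The proof reduces to
\begin{align*}
\lim_{k\to\infty}\,\limsup_{n\to\infty}\E\bigl[|Y(n)-Y(k)|\bigm|\tau_x>n\bigr]=0.
\end{align*}
To handle the residual I bound $|Y(n)-Y(k)|\le\Phi_k(n):=\sup_{k\le j\le n}|Y(j)-Y(k)|$, which is $\mathcal{F}_n$-measurable, nondecreasing in $n$, and bounded by $2k_Y$, with $\Phi_k(n)\uparrow\Phi_k:=\sup_{j\ge k}|Y(j)-Y(k)|$ and $\wE_x[\Phi_k]\to 0$ as $k\to\infty$ by dominated convergence under $\wP_x$. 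For each fixed $\ell\ge k$, part (a) applied to the $\mathcal{F}_\ell$-measurable truncation $\Phi_k(\ell)$ gives $\lim_n\E[\Phi_k(\ell)\mid\tau_x>n]=\wE_x[\Phi_k(\ell)]\le\wE_x[\Phi_k]$.

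The critical remaining step is a uniform-in-$n$ control of the tail increment $\E[\Phi_k(n)-\Phi_k(\ell)\mid\tau_x>n]$ for $n\ge\ell$. My approach is to apply the Markov property at time $\ell$: the $\mathcal{F}_\ell$-conditional expectation of this residual involves only the post-$\ell$ walk, and by \eqref{eq:tau-tail} and Corollary~\ref{tau_V_estimate} the conditional law of the post-$\ell$ path given $\{\tau_x>n\}$ is identified, in the limit $n\to\infty$, with the Doob $h$-transform starting from $x+S(\ell)$. The triangle bound $\Phi_k(n)-\Phi_k(\ell)\le 2\sup_{\ell\le j\le n}|Y(j)-Y(\ell)|$ then reduces the residual to an expectation of the form $\wE_x\!\left[\wE_{x+S(\ell)}\!\left[\Phi_\ell^{\mathrm{shift}}\right]\right]$, which tends to $0$ as $\ell\to\infty$ by the Markov property of the $h$-transform together with the $\wP_x$-a.s.\ convergence of $Y$. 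The principal obstacle is precisely this identification of the asymptotic conditional law and the accompanying interchange of limits in $n$ and $\ell$, since $\Phi_k$ is not $\mathcal{F}_n$-measurable for any finite $n$; the two-stage interpolation $k\le\ell\le n$ is what converts the pointwise $\wP_x$-convergence into the required uniform estimate.
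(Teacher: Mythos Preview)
Your argument for part~(a) is correct and matches the paper's proof essentially verbatim: Markov property at time $k$, pointwise convergence via \eqref{eq:tau-tail}, and dominated convergence using \eqref{eq:tau-u-bound}.

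Part~(b), however, has a genuine gap. Your reduction to showing
\[
\lim_{k\to\infty}\limsup_{n\to\infty}\E\bigl[|Y(n)-Y(k)|\bigm|\tau_x>n\bigr]=0
\]
is fine, but your proposed control of the residual is circular. The quantity $\sup_{\ell\le j\le n}|Y(j)-Y(\ell)|$ is \emph{not} a functional of the post-$\ell$ walk alone: each $Y(j)$ is $\mathcal{F}_j$-measurable and in general retains dependence on $S(1),\dots,S(\ell)$, so the Markov property at time $\ell$ does not produce a clean shifted expectation. More fundamentally, even granting such a decomposition, passing from $\E[\,\cdot\mid\tau_x>n]$ to $\wE_x[\,\cdot\,]$ for a functional that depends on the walk up to the moving horizon $n$ is precisely the content of part~(b) itself. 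Your two-stage interpolation $k\le\ell\le n$ just reproduces the original problem with $k$ replaced by $\ell$; nothing has been gained.

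The paper resolves this with a different device: it introduces a parameter $\chi>1$ and compares $\E[Y(n)\mid\tau_x>\chi n]$ with $\E[Y(k)\mid\tau_x>\chi n]$. Applying the Markov property at time $n$ (not at a fixed intermediate time) and using Corollary~\ref{tau_V_estimate} on the remaining interval of length $(\chi-1)n$ yields
\[
\bigl|\E[Y(k)\mid\tau_x>\chi n]-\E[Y(n)\mid\tau_x>\chi n]\bigr|
\le C\Bigl(\tfrac{\chi}{\chi-1}\Bigr)^{p/2}\wE_x|Y(k)-Y(n)|+o(1),
\]
because the factor $V(x+S(n))$ produced by the tail bound is exactly the $h$-transform weight. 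The discrepancy between conditioning on $\{\tau_x>n\}$ and $\{\tau_x>\chi n\}$ is then bounded by $2k_Y(\chi^{p/2}-1)$ using \eqref{eq:tau-tail}, and one lets $k\to\infty$ followed by $\chi\downarrow1$. The extra slack $(\chi-1)n$ past the horizon $n$ is the missing idea that converts the conditioning into the Doob transform without circularity.
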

\begin{proof}
Recalling that the random variable $Y(k)$ is measurable with respect to $\sigma(S(1),S(2),\ldots,S(k))$ and applying the Markov property, we obtain
$$
\E[Y(k);\tau_x>n]
=\int_{\R_+^2}\E\left[Y(k){\rm I}_{\{\tau_x>k\}};x+S(k)\in dz\right]
\P(\tau_z>n-k).
$$
In view of \eqref{eq:tau-tail},
$$
n^{p/2}\P(\tau_z>n-k)\to V(z)
\quad\text{for every }z\in K.
$$
Furthermore, \eqref{eq:tau-u-bound} implies that we may apply Lebesgue's theorem on dominated convergence. As a result we have
\begin{align*}
&\E[Y(k)|\tau_x>n]\\
&\hspace{1cm}
=\frac{1}{\P(\tau_x>n)}\E[Y(k);\tau_x>n]\\
&\hspace{1cm}
=\frac{1}{n^{p/2}\P(\tau_x>n)}
\int_{\R_+^2}\E\left[Y(k){\rm I}_{\{\tau_x>k\}};x+S(k)\in dz\right]
n^{p/2}\P(\tau_z>n-k)\\
&\hspace{1cm}
\rightarrow\frac{1}{V(x)}\E[V(x+S(k))Y(k);\tau_x>k]=\wE_x[Y(k)]
\quad \text{as }n\to\infty.
\end{align*}
Thus, the first convergence is proven.

To prove the second claim we first consider the difference
$$
\E[Y(k)|\tau_x>\chi n]-\E[Y(n)|\tau_x>\chi n],
$$
where $\chi>1$ is a constant. Applying first the triangle inequality and using then the Markov property at time $n$, we have 
\begin{align*}
&\left|\E[Y(k)|\tau_x>\chi n]-\E[Y(n)|\tau_x>\chi n]\right|\\
&\hspace{5mm}
\le\frac{1}{\P(\tau_x>\chi n)}\E[|Y(k)-Y(n)|;\tau_x>\chi n]\\
&\hspace{5mm}
=\frac{1}{\P(\tau_x>\chi n)}\int_{\R_+^2}
\E[|Y(k)-Y(n)|{\rm I}_{\{\tau_x>n\}};x+S(n)\in dz]
\P(\tau_z>(\chi-1)n). 
\end{align*}
Let $D$ be a compact set for which Corollary~\ref{tau_V_estimate} holds. Then
\begin{align}
\label{eq:first-part-a}
\nonumber
&\int_{\R_+^2\setminus D}
\E[|Y(k)-Y(n)|{\rm I}_{\{\tau_x>n\}};x+S(n)\in dz]
\P(\tau_z>(\chi-1)n)\\
\nonumber
&\hspace{1cm}
\le \frac{C}{(\chi-1)^{p/2}n^{p/2}}
\int_{\R_+^2\setminus D}
\E[|Y(k)-Y(n)|V(x+S(n));\tau_x>n]\\
&\hspace{1cm}\le C\frac{V(x)}{(\chi-1)^{p/2}n^{p/2}}
\wE |Y(k)-Y(n)|.
\end{align}
Furthermore, recalling that random variables $\{Y(l)\}$ are uniformly bounded and applying Corollary~\ref{cor:bounded_sets}, we get 

\begin{align}
\label{eq:first-part-b}
\nonumber
&\int_{D}
\E[|Y(k)-Y(n)|{\rm I}_{\{\tau_x>n\}};x+S(n)\in dz]
\P(\tau_z>(\chi-1)n)\\
\nonumber
&\hspace{1cm}
\le 2k_Y\P(x+S(n)\in D;\tau_x>n)\\
&\hspace{1cm}
\le C(x,D)k_Y n^{-p/2-1}.
\end{align}
Combining \eqref{eq:first-part-a} and \eqref{eq:first-part-b},
and recalling the assumption that $Y(n)$ converges to $Y(\infty)$, we conclude that
\begin{align}
\label{eq:first-part}
\nonumber
&\limsup_{n\to\infty}
\left|\E[Y(k)|\tau_x>\chi n]-\E[Y(n)|\tau_x>\chi n]\right|\\
&\hspace{2cm}
\le C\left(\frac{\chi}{\chi-1}\right)^{p/2}
\wE_x|Y(k)-Y(\infty)|.
\end{align}

Using once again the assumption that $|Y(l)|\le k_Y$ for all $l$
and applying \eqref{eq:tau-tail}, we get 
\begin{align}
\label{eq:second-part}
\nonumber
&\limsup_{n\to\infty}
\left|\E[Y(n)|\tau_x>n]-\E[Y(n)|\tau_x>\chi n]\right|\\
\nonumber 
&\hspace{1cm}
\le\limsup_{n\to\infty} 2k_Y
\frac{\P(\tau_x>n)-\P(\tau_x>\chi n)}{\P(\tau_x>\chi n)}\\
&\hspace{1cm}
=2k_Y(\chi^{p/2}-1).
\end{align}
By the first part of the lemma,
\begin{align}
\label{eq:third-part}
\lim_{n\to\infty}\E[Y(k)|\tau_x>\chi n]=\wE_x[Y(k)]
\end{align}
for every fixed $k$. 
Combining \eqref{eq:first-part}, \eqref{eq:second-part}, \eqref{eq:third-part} with the triangle inequality 
\begin{align*}
&\left|\E[Y(n)|\tau_x>n]-\wE_x[Y(\infty)]\right|\\
&\hspace{5mm}
\le\left|\E[Y(n)|\tau_x>n]-\E[Y(n)|\tau_x>\chi n]\right|
+\left|\E[Y(n)|\tau_x>\chi n]-\E[Y(k)|\tau_x>\chi n]\right|\\
&\hspace{1cm}+
\left|\E[Y(k)|\tau_x>\chi n]-\wE_x[Y(k)]\right|
+\left|\wE_x[Y(k)]-\wE_x[Y(\infty)]\right|,
\end{align*}
we infer that 
\begin{align*}
&\limsup_{n\to\infty}
\left|\E[Y(n)|\tau_x>n]-\wE_x[Y(\infty)]\right|\\
&\hspace{3mm}\le
2k_Y(\chi^{p/2}-1)+
C\left(\frac{\chi}{\chi-1}\right)^{p/2}
\wE_x|Y(k)-Y(\infty)|
+\left|\wE_x[Y(k)]-\wE_x[Y(\infty)]\right|.
\end{align*}
Letting $k\to\infty$ and using the assumption that $Y(n)$ converges towards $Y(\infty)$, we have 
\begin{align*}
\limsup_{n\to\infty}
\left|\E[Y(n)|\tau_x>n]-\wE_x[Y(\infty)]\right|
\le
2k_Y(\chi^{p/2}-1).
\end{align*}
Letting now $\chi\downarrow 1$, we complete the proof of the second claim.
\end{proof}

        Later we shall apply this lemma to random variables on the right hand side of \eqref{eq:non-ext.prob}.
Define
\begin{align}
\label{eq:Y-def}
Y(n)=
\left(1-\left(\frac{\sum_{k=1}^n e^{-S_1(k)}}{1+\sum_{k=1}^n e^{-S_1(k)}}\right)^{z_1}\right)
\left(1-\left(\frac{\sum_{k=1}^n e^{-S_2(k)}}{1+\sum_{k=1}^n e^{-S_2(k)}}\right)^{z_2}\right),
\end{align}
        \begin{align*}
                m(n) := \min\bigl\{S_1(k), S_2(k),\:\: 0\leqslant k\leqslant n\bigr\}
        \end{align*}
        and consider the family of events
        \begin{align*}
                D_{n,R} := \bigl\{m(n)\in[-R - 1, -R]\bigr\},\quad n\ge 1,\ R>0.
        \end{align*}

        Noting that 
        \begin{align*}
                e^{-S_j(k)}\geqslant \exp\Bigl\{-m(n)\Bigr\},\quad j=1,2,
        \end{align*}
        we have $Y(n)\le e^{-R}$ and, consequently,
        \begin{align*}
                \E\Biggl[Y(n); D_{n, R}\Biggr]
                \leqslant e^{-R}\P\bigl(D_{n, R}\bigr).
        \end{align*}
        Furthermore we have 
        \begin{align*}
                D_{n, R}\subset\bigl\{m(n) > -R - 1\bigr\}.
        \end{align*}
        This implies that
        \begin{align*}
                \P\bigl(D_{n, R}\bigr)\leqslant\P\bigl(\tau_{(R + 1, R + 1)} > n\bigr). 
        \end{align*}
        Thus, applying Corollary~\ref{tau_V_estimate}, we obtain, for all sufficiently large $R$,
        \begin{align*}
                \P\bigl(D_{n, R}\bigr)\leqslant 
                \frac{C V\bigl(R + 1, R + 1\bigr)}{n^\frac{p}{2}}\leqslant\frac{CR^p}{n^\frac{p}{2}}.
        \end{align*}
        Consequently,
        \begin{align*}
                \E\left[Y(n); D_{n, R}\right]
                \leqslant CR^pe^{-R}n^{-p/2}.
        \end{align*}
        Let us fix some sufficiently large $R_0$. Then one has
        \begin{align}\label{left_tail}
        \nonumber
                \E\left[Y(n);
                m(n) \leqslant -R_0\right] 
                &=
                \sum\limits_{R = R_0}^\infty\E\left[Y(n); D_{n, R}\right]\\
                &\leqslant \frac{C}{n^\frac{p}{2}}\sum\limits_{R = R_0}^\infty R^p e^{-R}
                \leqslant\frac{C}{n^\frac{p}{2}}\int\limits_{R_0}^\infty z^p e^{-z}dz \leqslant \frac{C}
                {n^\frac{p}{2}}R_0^p e^{-R_0}.
        \end{align}
        Thus, we are left to determine the asymptotic behaviour of
        \begin{align*}
                \E\left[Y(n); m(n) > -R_0\right].                
        \end{align*}
        We first notice that  
        \begin{align*}
                \E\left[Y(n); m(n) > -R_0\right]
                &=
                \E\left[Y(n); \tau_{(R_0, R_0)} > n\right] 
                \\
                &
                = \E\left[Y(n)\big|\:\: 
                \tau_{(R_0, R_0)} > n\right]\P\bigl(\tau_{(R_0, R_0)} > n\bigr).
        \end{align*}
        
        As $\sum\limits_{k = 1}^n e^{-S_i(k)} \geqslant 0$ for $i = 1, 2$, the sequence $Y(n)$ is uniformly bounded. Moreover,
        $\{Y(n)\}$ is decreasing and bounded by $0$ from below. Therefore, the sequence $\{Y(n)\}$ converges
        $\wP_{(R_0, R_0)}$-a.s. to some $Y(\infty)$. Furthermore, Proposition~\ref{prop:repulsion} implies that 
        $\wP_{(R_0,R_0)}(Y(\infty)>0)=1$.

        Thus, according to Lemma \ref{cond_limit},
        \begin{align*}
                \lim\limits_{n\to\infty}\E\bigl[Y(n)\big|\:\:\tau_{(R_0, R_0) > n}\bigr] = \wE_{(R_0, R_0)}
                \bigl[Y(\infty)\bigr]>0. 
        \end{align*}
        Combining this with \eqref{eq:tau-tail}, we infer that, as $n\to\infty$,
        %Я убрал скобки в V((R_0, R_0)), так как ты везде без них пишешь
        \begin{align}\label{right_tail}
                \E\left[Y(n); m(n) > -R_0\right]
                \sim \frac{\varkappa}{n^\frac{p}{2}}V\bigl(R_0, R_0\bigr)\wE_{(R_0, R_0)}
                \bigl[Y(\infty)\bigr].
        \end{align}
        Combining \eqref{left_tail} and \eqref{right_tail}, we obtain
        \begin{align}
        \label{eq:lower-bound} 
        \liminf\limits_{n\to\infty} n^\frac{p}{2}\P\bigl(Z_1(n) > 0, Z_2(n) > 0\bigr)>0
        \end{align}
and
        \begin{align}\label{boundary}
                \limsup\limits_{n\to\infty} n^\frac{p}{2}\P\bigl(Z_1(n) > 0, Z_2(n) > 0\bigr)\leqslant C_b<
                \infty.
        \end{align}
        Consider now the function
        \begin{align*}
                g(y) := V(y, y)\wE_{(y, y)}\bigl[Y(\infty)\bigr],
        \end{align*}
        for all $y$ sufficiently large $y$, say $y\ge y_0$. Relation \eqref{boundary} implies that $g(y)\in [0;C_b]$ for all $y\geqslant y_0.$
        According to Bolzano–Weierstrass theorem there exists a sequence $\{y_l\}_{l = 1}^\infty$ such that 
        $y_l\to\infty$ and $\lim\limits_{l\to\infty}g(y_l) = C_g.$

        According to \eqref{right_tail} with $R_0=y_l$,
        \begin{align*}
                \P\bigl(Z_1(n) > 0, Z_2(n) > 0\bigr) \geqslant \P\bigl(Z_1(n) > 0, Z_2(n) > 0,\:\: m(n) > -y_l\bigr)
                \sim\frac{\varkappa\cdot g(y_l)}{n^\frac{p}{2}},
        \end{align*}
        which implies that
        \begin{align*}
                \ \liminf\limits_{n\to\infty}n^\frac{p}{2}\P\bigl(Z_1(n) > 0, 
                Z_2(n) > 0\bigr)\ge \varkappa g(y_l).
        \end{align*}
        To get the corresponding upper bound we combine \eqref{right_tail} and \eqref{left_tail}:
        \begin{align*}
                &\limsup\limits_{n\to\infty}n^\frac{p}{2}\P\bigl(Z_1(n) > 0, Z_2(n) > 0\bigr) 
                \\
                &\hspace{1cm}= 
                \limsup\limits_{n\to\infty}n^\frac{p}{2}\cdot\Bigg(\P\bigl(Z_1(n) > 0, Z_2(n) > 0,\:\: m(n) \leqslant 
                -y_l\bigr) 
                \\
                &\hspace{2cm}
                + \P\bigl(Z_1(n) > 0, Z_2(n) > 0,\:\: m(n) > -y_l\bigr)\Biggr)\\
                &\hspace{1cm}
                \leqslant\varkappa g(y_l) + C_2y_l^p e^{-y_l}.
        \end{align*}
        Therefore,
        \begin{align*}
                \varkappa g(y_l)&\leqslant \liminf\limits_{n\to\infty}n^\frac{p}{2}\P\bigl(Z_1(n) > 0, 
                Z_2(n) > 0\bigr)
                \\
                &\leqslant
                \limsup\limits_{n\to\infty}n^\frac{p}{2}\P\bigl(Z_1(n) > 0, Z_2(n) > 0\bigr)
                \leqslant \varkappa g(y_l) + C_2y_l^p e^{-y_l}.
        \end{align*}
        Leting $l \to \infty$ we have
        \begin{align}
        \label{eq:R-limit}
        \lim_{R_0\to\infty} V(R_0,R_0)\wE_{(R_0,R_0)}[Y(\infty)]=C_g
        \end{align}
        and  
        \begin{align}
        \label{eq:limit}
                \lim\limits_{n\to\infty}n^\frac{p}{2}\P\bigl(Z_1(n) > 0, Z_2(n) > 0\bigr) = \varkappa C_g.
        \end{align}
Noting that \eqref{eq:lower-bound} implies the positivity of the limit in \eqref{eq:limit}, we finish the proof of Theorem~\ref{thm:coexistence}.
%%%%%%%%%%%%%%%%%%%%%%%%%%%%%%%%%%%%%%%%%%%%%%%%%%%%%%%%%%%%%%%%%%%%%%%%%%
%%%%%%%%%%%%%%%%%%%%%%%%%%%%%%%%%%%%%%%%%%%%%%%%%%%%%%%%%%%%%%%%%%%%%%%%%%
\section{Proof of Theorem~\ref{thm:cond.limit}}
The proof below uses the standard for branching processes in random environments strategy. We start by showing that the trajectories of 
$z^{(n)}$ are close the trajectories of the rescaled random walk 
$S(k)$.
\begin{lemma}
\label{lem:z-s}
For every $\varepsilon>0$ we have 
$$
\P_z\left(\max_{k\le n}|\log Z_i(k)-S_i(k)|\ge\varepsilon\sqrt{n},
Z_i(n)>0
\big|\mathbf{\Pi}\right)\le Cz_ine^{-\varepsilon\sqrt{n}}
\quad n\ge1.
$$
\end{lemma}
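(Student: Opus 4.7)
The plan is to decompose the deviation event $\{\max_{k\le n}|\log Z_i(k)-S_i(k)|\ge \varepsilon\sqrt n\}$ into an upper branch and a lower branch and to treat them with two complementary tools: Doob's maximal inequality for the upper branch, and a union bound exploiting the fractional-linear structure \eqref{eq:geom} for the lower branch. The crucial observation throughout is that on $\{Z_i(n)>0\}$ extinction has not occurred by time $n$, so $Z_i(k)\ge 1$ for every $k\le n$.

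First I would treat the upper deviation by introducing the process $M_i(k):=Z_i(k)e^{-S_i(k)}$. Conditionally on the environment this is a non-negative martingale with initial value $z_i$, since $\E[Z_i(k+1)\mid Z_i(k),\mathbf{\Pi}]=Z_i(k)e^{X_i(k+1)}$. Doob's maximal inequality then yields
$$
\P_z\Bigl(\max_{k\le n}\bigl(\log Z_i(k)-S_i(k)\bigr)\ge \varepsilon\sqrt n \,\Big|\, \mathbf{\Pi}\Bigr)\le z_i\, e^{-\varepsilon\sqrt n},
$$
which settles the upper branch.

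For the lower deviation I would use a union bound. Since $Z_i(k)\ge 1$ for $k\le n$ on $\{Z_i(n)>0\}$, it suffices to estimate $\P_z(1\le Z_i(k)\le e^{S_i(k)-\varepsilon\sqrt n}\,|\,\mathbf{\Pi})$ for each $k\le n$ and sum over $k$. Writing $Z_i(k)=\sum_{j=1}^{z_i}W_j(k)$ as the sum of $z_i$ i.i.d.\ (under $\mathbf{\Pi}$) one-particle processes and observing that $\{1\le Z_i(k)\le N\}\subset\bigcup_{j}\{1\le W_j(k)\le N\}$, one further reduces, at the cost of a factor $z_i$, to bounding $\P(1\le W_1(k)\le N\,|\,\mathbf{\Pi})$. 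This is the point at which \eqref{eq:geom} becomes essential: because all offspring laws are geometric, the iterated quenched generating function $f_{0,k}(s):=\E[s^{W_1(k)}\mid \mathbf{\Pi}]$ remains fractional linear, and a direct calculation shows that conditionally on $\{W_1(k)>0\}$ the variable $W_1(k)$ is geometric on $\{1,2,\dots\}$ with parameter $1-\pi_k$ where
$$
1-\pi_k=\P\bigl(W_1(k)>0\,\big|\,\mathbf{\Pi}\bigr)\,e^{-S_i(k)}\le e^{-S_i(k)}.
$$
Consequently $\P(1\le W_1(k)\le N\mid \mathbf{\Pi})\le N(1-\pi_k)\le N e^{-S_i(k)}$; substituting $N=e^{S_i(k)-\varepsilon\sqrt n}$ gives $e^{-\varepsilon\sqrt n}$ per summand, and summing over the $z_i$ subprocesses and over $k=1,\dots,n$ delivers $z_i n\,e^{-\varepsilon\sqrt n}$ for the lower branch. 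Combining with the upper-branch bound produces the asserted estimate.

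The main subtle point, and the reason the geometric hypothesis really pulls its weight, is that the lower-tail bound carries exactly the factor $e^{-S_i(k)}$ needed to cancel the $e^{S_i(k)}$ in the choice $N=e^{S_i(k)-\varepsilon\sqrt n}$; it is this cancellation that produces a bound independent of the environment. Without \eqref{eq:geom} the same cancellation would have to be recovered via the Geiger--Kersting construction of \cite{GK02}, as the authors themselves hint earlier in the paper.
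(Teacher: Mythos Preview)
Your proof is correct and follows the same overall strategy as the paper: split into upper and lower deviations, handle the upper one via Doob's maximal inequality for the quenched martingale $Z_i(k)e^{-S_i(k)}$, and handle the lower one by a union bound over $k$ together with the fractional-linear structure coming from \eqref{eq:geom}.

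The only difference is in how the per-$k$ lower-tail bound is extracted. The paper stays at the level of generating functions: it writes down the explicit formula $\frac{1}{1-F_{i,k}(s)}=\sum_{j<k}e^{-S_i(j)}+\frac{e^{-S_i(k)}}{1-s}$, bounds $F_{i,k}(s)-F_{i,k}(0)$, and then applies a tilted Markov (Chernoff-type) inequality with the specific choice $s=1-\tfrac12 e^{-S_i(k)+\varepsilon\sqrt n}$, passing to general $z_i$ via $(F_{i,k}(s))^{z_i}-(F_{i,k}(0))^{z_i}\le z_i(F_{i,k}(s)-F_{i,k}(0))$. You instead decompose $Z_i(k)$ into $z_i$ independent single-ancestor processes, use the inclusion $\{1\le Z_i(k)\le N\}\subset\bigcup_j\{1\le W_j(k)\le N\}$, and read off directly that $W_1(k)\mid\{W_1(k)>0\}$ is geometric with success probability $\P(W_1(k)>0\mid\mathbf\Pi)\,e^{-S_i(k)}\le e^{-S_i(k)}$, giving $\P(1\le W_1(k)\le N\mid\mathbf\Pi)\le N e^{-S_i(k)}$. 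Both routes exploit exactly the same structural fact; yours is a bit more elementary and yields a slightly better constant, while the paper's Chernoff argument is closer in form to what one would do in the non-geometric setting via \cite{GK02}.
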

\begin{proof}
Since $\frac{Z_i(k)}{e^{S_i(k)}}$ is a martingale, by the Doob inequality,
\begin{align}
\label{eq:z-s.1}
\nonumber
\P_z\left(\max_{k\le n}\left(\log Z_i(k)-S_i(k)\right)\ge\varepsilon\sqrt{n}
\big|\mathbf{\Pi}\right)
&\le\P_z\left(\max_{k\le n}\frac{Z_i(k)}{e^{S_i(k)}}\ge e^{\varepsilon\sqrt{n}}\right)\\
&\le 
z_ie^{-\varepsilon\sqrt{n}}.
\end{align}
To estimate deviations of $\log Z_i(k)$ to the left from $S_i(k)$ we notice that 
\begin{align}
\label{eq:z-s.2}
\nonumber
&\P_z\left(\min_{k\le n}\left(\log Z_i(k)-S_i(k)\right)\le-\varepsilon\sqrt{n},Z_i(n)>0\big|\mathbf{\Pi}\right)\\
&\hspace{2cm}\le \sum_{k=1}^n
\P_z\left(1\le Z_i(n)\le e^{S_i(k)-\varepsilon\sqrt{n}}
\big|\mathbf{\Pi}\right).
\end{align}
Our assumption that all offspring distributions are geometric allows us to write down an explicit formula for generating functions of $Z_i(k)$. Let $F_{i,k}(s)$ denote the conditional generating function 
of $Z_i(k)$ starting from one ancestor, that is,
$$
F_{i,k}(s)=\E\left[s^{Z_i(k)}\big|Z_i(0)=1,\mathbf{\Pi}\right].
$$
One has then the following equality:
$$
\frac{1}{1-F_{i,k}(s)}
=\sum_{j=0}^{k-1}e^{-S_i(j)}+\frac{e^{-S_i(k)}}{1-s}.
$$
This implies that 
$$
1-F_{i,k}(s)=\frac{1-s}{\sum_{j=0}^{k}e^{-S_i(j)}-s\sum_{j=0}^{k-1}e^{-S_i(j)}}
$$
and, consequently,
\begin{align*}
F_{i,k}(s)-F_{i,k}(0)
&=(1-F_{i,k}(0))-(1-F_{i,k}(s))\\
&=\frac{se^{-S_i(k)}}{\sum_{j=0}^{k}e^{-S_i(j)}\left(\sum_{j=0}^{k}e^{-S_i(j)}-s\sum_{j=0}^{k-1}e^{-S_i(j)}\right)}\\
&\le \frac{e^{-S_i(k)}}{\left(\sum_{j=0}^{k}e^{-S_i(j)}-s\sum_{j=0}^{k-1}e^{-S_i(j)}\right)}
\end{align*}
for all $s\in(0,1]$. 
Setting here
$$
s=s_{k,n}=1-\frac{e^{-S_i(k)+\varepsilon\sqrt{n}}}{2},
$$
we get 
\begin{align}
\label{eq:F-bound}
F_{i,k}(s_{k,n})-F_{i,k}(0)
\le \frac{e^{-S_i(k)}}
{e^{-S_i(k)}+\frac{e^{-S_i(k)+\varepsilon\sqrt{n}}}{2}}
\le 2 e^{-\varepsilon\sqrt{n}}.
\end{align}
We next notice that 
\begin{align*}
&\P_z\left(1\le Z_i(k)\le e^{S_i(k)-\varepsilon\sqrt{n}}
\big|\mathbf{\Pi}\right)\\
&\hspace{1cm}\le \left(s_{k,n}\right)^{-e^{S_i(k)-\varepsilon\sqrt{n}}}
\sum_{1\le m\le e^{S_i(k)-\varepsilon\sqrt{n}}}
s_{k,n}^m\P_z\left(Z_i(k)=m\big|\mathbf{\Pi}\right)\\
&\hspace{1cm}
\le \left(s_{k,n}\right)^{-e^{S_i(k)-\varepsilon\sqrt{n}}}
\left((F_{i,k}(s_{k,n}))^{z_i}-(F_{i,k}(0))^{z_i}\right)\\
&\hspace{1cm}
\le z_i\sup_{u\ge1}\left(1-\frac{1}{2u}\right)^{-u}
\left(F_{i,k}(s_{k,n})-F_{i,k}(0)\right).
\end{align*}
Combining this with \eqref{eq:F-bound}, we have 
\begin{align*}
 \P_z\left(1\le Z_i(k)\le e^{S_i(k)-\varepsilon\sqrt{n}}
\big|\mathbf{\Pi}\right)
\le (2e)z_ie^{-\varepsilon\sqrt{n}}.
\end{align*}
Plugging this into \eqref{eq:z-s.2}, we get the desired bound for lower deviations. This completes the proof of the lemma.
\end{proof}
Define
$$
s^{(n)}(t)
=\left(\frac{S_1(nt)}{\sqrt{n}},\frac{S_2(nt)}{\sqrt{n}}\right),
\quad t\in[0,1].
$$
Then, Lemma~\ref{lem:z-s} yields the following bound.
\begin{corollary}
\label{cor:z-s}
For every $\varepsilon>0$ one has 
$$
\P_z\left(\sup_{t\le1}|z^{(n)}(t)-s^{(n)}(t)|\ge\varepsilon
\Big|E(n)\right)\to0.
$$
\end{corollary}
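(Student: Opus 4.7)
The plan is to deduce Corollary~\ref{cor:z-s} directly from Lemma~\ref{lem:z-s} combined with the asymptotic $\P_z(E(n))\sim A(z)n^{-\theta(\varrho)}$ provided by Theorem~\ref{thm:coexistence}. The key observation is that both $z^{(n)}$ and $s^{(n)}$ are step functions on $[0,1]$ with jumps only at the grid points $k/n$, so
$$
\sup_{t\le 1}|z^{(n)}(t)-s^{(n)}(t)| = \frac{1}{\sqrt{n}}\max_{i\in\{1,2\}}\max_{k\le n}|\log Z_i(k)-S_i(k)|,
$$
and on $E(n)$ the coordinates $Z_i(k)$ remain positive for every $k\le n$, so the logarithms are well defined.

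First I would apply Lemma~\ref{lem:z-s} conditionally on the environment to each coordinate $i\in\{1,2\}$, obtaining a deterministic bound $Cz_i n e^{-\varepsilon\sqrt{n}}$ that does not depend on $\mathbf{\Pi}$. Integrating out the environment and taking a union bound over $i=1,2$ yields
$$
\P_z\left(\sup_{t\le1}|z^{(n)}(t)-s^{(n)}(t)|\ge\varepsilon,\, E(n)\right)\le C(z_1+z_2)\, n\, e^{-\varepsilon\sqrt{n}}.
$$
Dividing by $\P_z(E(n))\sim A(z)n^{-\theta(\varrho)}$ and applying Theorem~\ref{thm:coexistence} gives
$$
\P_z\left(\sup_{t\le1}|z^{(n)}(t)-s^{(n)}(t)|\ge\varepsilon\,\Big|\,E(n)\right)\le \frac{C(z_1+z_2)}{A(z)}\, n^{1+\theta(\varrho)}e^{-\varepsilon\sqrt{n}}(1+o(1)),
$$
which tends to zero as $n\to\infty$ because $e^{-\varepsilon\sqrt{n}}$ decays faster than any polynomial in $n$. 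There is no real obstacle here: the sub-Gaussian estimate of Lemma~\ref{lem:z-s} dwarfs the polynomial inflation coming from conditioning on the rare event $E(n)$, so the conclusion follows in a single clean computation.
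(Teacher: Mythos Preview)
Your proposal is correct and follows essentially the same route as the paper: combine Lemma~\ref{lem:z-s} with Theorem~\ref{thm:coexistence} to obtain a bound of order $n^{1+\theta}e^{-\varepsilon\sqrt{n}}$ for the conditional probability, which tends to zero. The paper's proof is a two-line version of exactly this argument; you have simply spelled out the union bound over the two coordinates and the passage from the quenched estimate to the annealed one.
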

\begin{proof}
Combining Lemma~\ref{lem:z-s} and Theorem~\ref{thm:coexistence}, we have 
$$
\P_z\left(\sup_{t\le1}|z^{(n)}(t)-s^{(n)}(t)|\ge\varepsilon
\Big|E(n)\right)\le Cn^{\theta+1}e^{-\varepsilon\sqrt{n}}.
$$
This implies the desired convergence.
\end{proof}

Let $f$ a uniformly continuous, bounded functional on $D[0,1]$.
Then, for every $\delta>0$ there exists $\varepsilon>0$ such that 
$|f(h_1)-f(h_2)|<\delta$ for all $h_1,h_2$ such that 
$\sup_{t\le 1}|h_1(t)-h_2(t)|<\varepsilon$. This implies that 
\begin{align*}
&\left|\E_z[f(z^{(n)})|E(n)]-\E_z[f(s^{(n)})|E(n)]\right| \\
&\hspace{1cm}\le \delta
+2\|f\|\P_z\left(\sup_{t\le1}|z^{(n)}(t)-s^{(n)}(t)|\ge\varepsilon
\Big|E(n)\right).
\end{align*}
Applying Corollary~\ref{cor:z-s} and letting then $\delta\to0$, we conclude that 
$$
\lim_{n\to\infty}\left|\E_z[f(z^{(n)})|E(n)]-\E_z[f(s^{(n)})|E(n)]\right|=0. 
$$
Thus, it remains to determine the limit of $\E_z[f(s^{(n)})|E(n)]$.
Conditioning on the environment, we obtain 
\begin{align}
\label{eq:lt1}
\E_z[f(s^{(n)});E(n)]
=\E[\E_z[f(s^{(n)}){\rm I}_{\{E(n)\}}|\mathbf{\Pi}]
=\E[f(s^{(n)})Y(n)],
\end{align}
where $Y(n)$ is the random variable defined in \eqref{eq:Y-def}.
As in the proof of Theorem~\ref{thm:coexistence} we split this expected value into two parts:
$$
\E[f(s^{(n)})Y(n)]
=\E[f(s^{(n)})Y(n);m(n)\le -R_0]+\E[f(s^{(n)})Y(n); m(n)>-R_0].
$$
Recalling that $f$ is bounded and applying \eqref{left_tail}, we have 
\begin{align}
\label{eq:lt2}
\nonumber
\E[f(s^{(n)})Y(n);m(n)\le -R_0]
&\le \|f\|\E[Y(n);m(n)\le -R_0]\\
&\le C\|f\|R_0^{p/2}e^{-R_0}n^{-p/2}.
\end{align}
Furthermore, for every fixed $k$,
\begin{align}
\label{eq:lt3}
\nonumber
&\E[f(s^{(n)})Y(n); m(n)>-R_0]\\
\nonumber
&\hspace{5mm}=\E[f(s^{(n)})Y(n); \tau_{(R_0,R_0)}>n]\\
&\hspace{5mm}=\E[f(s^{(n)})Y(k); \tau_{(R_0,R_0)}>n]
+\E[f(s^{(n)})(Y(n)-Y(k)); \tau_{(R_0,R_0)}>n].
\end{align}
Recalling that the sequence $\{Y(n)\}$ is monotone decreasing and applying Lemma~\ref{cond_limit}, we conclude that 
\begin{align}
\label{eq:lt4}
\nonumber
&\left|\E[f(s^{(n)})(Y(n)-Y(k)); \tau_{(R_0,R_0)}>n]\right|\\
\nonumber
&\hspace{1cm}
\le \|f\|\E[(Y(k)-Y(n)); \tau_{(R_0,R_0)}>n]
\\
\nonumber
&\hspace{1cm}
=\|f\|\P(\tau_{(R_0,R_0)}>n)\E[(Y(k)-Y(n))| \tau_{(R_0,R_0)}>n]\\
&\hspace{1cm}
\le C\|f\|\frac{V(R_0,R_0)}{n^{p/2}}
\wE_{(R_0,R_0)}[Y(k)-Y(\infty)].
\end{align}
To deal with $\E[f(s^{(n)})Y(k); \tau_{(R_0,R_0)}>n]$ we define 
$$
s_k^{(n)}(t)=\left\{ 
\begin{array}{ll}
\frac{S(k)}{\sqrt{n}}, &t\le\frac{k}{n},\\
s^{(n)}(t), &t\ge\frac{k}{n}.
\end{array}
\right.
$$
Our moment assumptions imply that 
\begin{align*}
\P\left(\max_{t\le 1}|s_k^{(n)}(t)-s^{(n)}(t)|\ge \varepsilon\right) 
\le\P\left(2\max_{j\le k}|S(j)|>\varepsilon\sqrt{n}\right)
=o(n^{-p/2}).
\end{align*}
This implies that 
\begin{align}
\label{eq:lt5}
\E[f(s^{(n)})Y(k); \tau_{(R_0,R_0)}>n]
=\E[f(s_k^{(n)})Y(k); \tau_{(R_0,R_0)}>n]+o(n^{-p/2}).
\end{align}
By the Markov property,
\begin{align*}
&\E[f(s_k^{(n)})Y(k); \tau_{(R_0,R_0)}>n]\\
&=\int_{\R_+^2}\E[Y(k);(R_0,R_0)+S(k)\in dx,\tau_{(R_0,R_0)}>k]
\E[f(s_{k,x}^{(n)}); \tau_x>n-k],
\end{align*}
where 
$$
s_{k,x}^{(n)}(t)=\left\{ 
\begin{array}{ll}
\frac{x}{\sqrt{n}}, &t\le\frac{k}{n},\\
\frac{x+S(nt-k)}{\sqrt{n}}, &t\ge\frac{k}{n}.
\end{array}
\right.
$$
Applying Theorem 2 from \cite{DuW20} as it was described in Section~2 and taking into account \eqref{eq:tau-tail}, we conclude that 
\begin{align*}
 \E[f(s_{k,x}^{(n)}); \tau_x>n-k]\sim\varkappa V(x)\E[f(M_\varrho)]
 n^{-p/2}
\end{align*}
for every $x\in\R_+^2$. Noting additionally that Corollary~\ref{tau_V_estimate} provides an integrable majorant for 
$n^{p/2}\E[f(s_{k,x}^{(n)}); \tau_x>n-k]$, we apply the Lebesgue theorem to get 
$$
n^{p/2}\E[f(s_k^{(n)})Y(k); \tau_{(R_0,R_0)}>n]
\to\varkappa\E[f(M_\varrho)] V(R_0,R_0)\wE_{(R_0,R_0)}[Y(k)].
$$
Combining this with \eqref{eq:lt3}---\eqref{eq:lt5}, we have 
\begin{align*}
&\limsup_{n\to\infty}
\Big|n^{p/2}\E[f(s^{(n)})Y(n); m(n)>-R_0]-\varkappa\E[f(M_\varrho)] V(R_0,R_0)\wE_{(R_0,R_0)}[Y(k)]\Big|\\
&\hspace{3cm}
\le C\|f\|V(R_0,R_0)\wE_{(R_0,R_0)}[Y(k)-Y(\infty)].
\end{align*}
Letting $k\to\infty$ and recalling that $Y(k)$ converges to $Y(\infty)$, we get 
$$
\lim_{n\to\infty}
n^{p/2}\E[f(s^{(n)})Y(n); m(n)>-R_0]=\varkappa\E[f(M_\varrho)] V(R_0,R_0)\wE_{(R_0,R_0)}[Y(\infty)].
$$
From this relation and from bound \eqref{eq:lt2} we obtain 
$$
\lim_{n\to\infty}n^{p/2}\E[f(s^{(n)})Y(n)]
=\varkappa\E[f(M_\varrho)]
\lim_{R_0\to\infty}V(R_0,R_0)\wE_{(R_0,R_0)}[Y(\infty)].
$$
Combining this relation with \eqref{eq:lt1} and taking into account \eqref{eq:R-limit}, we conclude that 
$$
\lim_{n\to\infty}n^{p/2}\E[f(s^{(n)});E(n)]
=\varkappa C_g \E[f(M_\varrho)].
$$
Applying now \eqref{eq:limit}, we finally get 
$$
\lim_{n\to\infty}\E[f(s^{(n)})|E(n)]=\E[f(M_\varrho)]
$$
for every uniformly continuous bounded functional $f$. Thus, the proof of Theorem~\ref{thm:cond.limit} is complete.
%%%%%%%%%%%%%%%%%%%%%%%%%%%%%%%%%%%%%%%%%%%%%%%%%%%%%%%%%%%%%%%%%%%%%%%%%%
%%%%%%%%%%%%%%%%%%%%%%%%%%%%%%%%%%%%%%%%%%%%%%%%%%%%%%%%%%%%%%%%%%%%%%%%%%

%%%%%%%%%%%%%%%%%%%%%%%%%%%%%%%%%%%%%%%%%%%%%%%%%%%%%%%%%%%%%%%%%%%%%%%%%%
%%%%%%%%%%%%%%%%%%%%%%%%%%%%%%%%%%%%%%%%%%%%%%%%%%%%%%%%%%%%%%%%%%%%%%%%%%
%%%%%%%%%%%%%%%%%%%%%%%%%%%%%%%%%%%%%%%%%%%%%%%%%%%%%%%%%%%%%%%%%%%%%%%%%%
%%%%%%%%%%%%%%%%%%%%%%%%%%%%%%%%%%%%%%%%%%%%%%%%%%%%%%%%%%%%%%%%%%%%%%%%%%
\end{document}